\DeclareMathAlphabet{\mathcalligra}{T1}{calligra}{m}{n}
\newtheorem{theorem}{Theorem}[section]
\newtheorem{remark}[theorem]{Remark}
\newtheorem{lemma}[theorem]{Lemma}
\newtheorem{corollary}[theorem]{Corollary}
\newcommand{\ontop}[2]{\genfrac{}{}{0pt}{}{#1}{#2}}
\newcommand{\NN}{\mathbb{N}}
\newcommand{\ZZ}{\mathbb{Z}}
\newcommand{\RR}{\mathbb{R}}
\newcommand{\PP}{\mathbb{P}}
\newcommand{\EE}{\mathbb{E}}
\newcommand{\var}{\text{var}}
\newcommand{\mix}{t_{\mathrm{mix}}} 
\newcommand{\rel}{t_{\mathrm{rel}}}
\newcommand{\sG}{\mathcal{G}}
\newcommand{\sS}{\mathcal{S}}
\newcommand{\sE}{\mathcal{E}}
\newcommand{\sL}{\mathcal{L}}
\newcommand{\sM}{\mathcal{M}}
\newcommand{\sP}{\mathcal{P}}
\newcommand{\sC}{\mathcal{C}}
\newcommand{\cycle}{\mathcal{C}_0}
\newcommand{\noedges}{\mathbf{0}}
\newcommand{\indicator}{\mathbf{1}}
\newcommand{\defect}{\mathcal{C}_2}
\newcommand{\doubledefect}{\mathcal{C}_4}
\newcommand{\worm}{\mathcal{W}}
\newcommand{\cp}{\sP}
\newcommand{\symdif}{\triangle}
\newcommand{\maxdeg}{\Delta}
\newcommand{\e}{\mathrm{e}}
\numberwithin{equation}{section}
\begin{document}
\title{The worm process for the Ising model is rapidly mixing}
\titlerunning{The worm process for the Ising model is rapidly mixing}        
\author{Andrea Collevecchio \and Timothy M. Garoni \and Timothy Hyndman \and Daniel Tokarev}
\authorrunning{Collevecchio, Garoni, Hyndman, \& Tokarev}
\institute{Andrea Collevecchio \and Timothy M. Garoni \and Timothy Hyndman \and Daniel Tokarev \at
  School of Mathematical Sciences, Monash University, Clayton, VIC, 3800, Australia\\
  Tel.: +61 3 9905 4465, Fax: +61 3 9905 4403\\
  \email{andrea.collevecchio@monash.edu}\\
  \email{tim.garoni@monash.edu}\\
  \email{tim.hyndman@monash.edu}\\
  \email{daniel.tokarev@monash.edu}
}
\date{Received: September 10, 2015 / Accepted: }
\maketitle
\begin{abstract}
We prove rapid mixing of the worm process for the zero-field ferromagnetic Ising model, on all finite connected graphs, and at all
temperatures. As a corollary, we obtain a fully-polynomial randomized approximation scheme for the Ising susceptibility, and for 
a certain restriction of the two-point correlation function.
\keywords{Markov chain \and Mixing time \and Ising model \and Worm algorithm}
\PACS{02.70.Tt \and 02.50.Ga \and 05.50.+q \and 05.10.Ln} 
  \subclass{82B20 \and 82B80 \and 60J10}
\end{abstract}

\section{Introduction}
\label{sec:introduction}
The ferromagnetic Ising model on finite graph $G=(V,E)$ at inverse temperature $\beta\ge0$ with external field $h\in\RR$ is defined by the Gibbs measure
\begin{equation}
  \PP_{G,\beta,h}(\sigma) = \frac{1}{Z_{G,\beta,h}} \e^{-\beta H_{G,h}(\sigma)}, \qquad\sigma\in\{-1,+1\}^V,
  \label{Ising measure}
\end{equation}
with Hamiltonian 
\begin{equation}
  H_{G,h}(\sigma) = -\sum_{ij\in E}\sigma_i\sigma_j - h \sum_{i\in V}\sigma_i,
  \label{Ising Hamiltonian}
\end{equation}
and partition function
\begin{equation}
  Z_{G,\beta,h} = \sum_{\sigma\in\{-1,1\}^V} \e^{-\beta H_{G,h}(\sigma)}.
\end{equation}

The central computational challenge associated with the measure~\eqref{Ising measure} is to estimate certain moments of physically relevant
random variables. Chief amongst such moments are the two-point correlation function and susceptibility. While the problem is simple to pose,
studying such quantities on large graphs is typically a computationally demanding task, and it is therefore not
surprising that the Markov-chain Monte Carlo method is one of the most common approaches employed. This
computational intractability can in fact be made precise in the language of complexity theory. It was recently
established~\cite{SinclairSrivastava14} that computing the susceptibility of the Ising model is \#P-hard, and the \#P-hardness of the
two-point correlation function then follows as an immediate corollary. Various \#P-hardness results are also known for the Ising partition
function~\cite{JerrumSinclair93,DyerGreenhill00,BulatovGrohe05}.

In order for a particular Markov chain to provide efficient estimators, it is necessary that it converges rapidly to
stationarity. Consequently, in addition to the single-spin Glauber process~\cite{Martinelli99}, which has a direct physical interpretation,
a host of more sophisticated processes have been devised, with the aim of improving the efficiency of the resulting estimators. These
processes typically augment, or replace, the spin measure~\eqref{Ising measure} with a particular graphical representation. For example,
Sweeny~\cite{Sweeny83} proposed studying the zero-field Potts model using the single-bond Glauber process for the Fortuin-Kasteleyn
model~\cite{FortuinKasteleyn72,Grimmett06}, while the Swendsen-Wang (SW) process~\cite{SwendsenWang87} simulates a natural
coupling~\cite{EdwardsSokal88} of the Fortuin-Kasteleyn model and zero-field Potts model. Similarly, Jerrum and
Sinclair~\cite{JerrumSinclair93} studied a single-bond Metropolis process for the high-temperature representation of the Ising model in a
strictly positive field.  Prokof'ev and Svistunov~\cite{ProkofevSvistunov01} also considered a space of Ising high temperature graphs,
however their \emph{worm} process applies to the case of zero field, and uses a novel choice of local moves. Given this abundance of
Markov-chain Monte Carlo algorithms for the Ising model, one naturally seeks to understand and compare their efficiencies.

A key quantity for characterizing the rate of convergence of reversible Markov chains is the relaxation time. For a reversible finite Markov
chain with transition matrix $P$, let $\lambda_{\star}$ denote the maximum absolute value of the non-trivial eigenvalues of $P$. The
relaxation time of $P$ is then simply the reciprocal of the (absolute) spectral gap
\begin{equation}
  \rel := \frac{1}{1-\lambda_{\star}}.
  \label{relaxation time definition}
\end{equation}
Each of the abovementioned types of process (single-spin Glauber, Swendsen-Wang etc), can be applied to the Ising model on any finite graph.
Let $\mathcal{F}$ be a given infinite family of finite graphs. For a given type of process, and a given choice of Ising parameters
$\beta,h$, for each $G\in\mathcal{F}$, we can consider the corresponding process for the Ising model on $G$ with parameters $\beta,h$. For
each $G\in\mathcal{F}$, we then have a corresponding value of $\rel$. If the map $G\mapsto \rel$ can be bounded above by a polynomial in
$|V(G)|$, one says that type of process is \emph{rapidly mixing} on $\mathcal{F}$, for the given choice of parameters $\beta,h$. Otherwise,
the mixing is said to be \emph{torpid}.

There is a vast literature discussing the mixing of the Ising Glauber process; see for example the survey~\cite{Martinelli99}. It has
recently been shown~\cite{MosselSly13} that if $(\maxdeg-1)\tanh\beta<1$, then the Ising Glauber process is rapidly mixing for all graphs of
maximum degree $\maxdeg$. This result is tight in the sense that if $(\maxdeg - 1) \tanh \beta > 1$, then, with high probability, the mixing
of the zero-field Ising Glauber process on random $\maxdeg$-regular graphs is torpid~\cite{GerschenfeldMontanari07}. For finite boxes in
$\ZZ^2$, the zero-field Glauber process is rapidly mixing above the critical temperature~\cite{MartinelliOlivieri94}, and \emph{at} the
critical temperature~\cite{LubetzkySly12}, but torpid below the critical temperature~\cite{CesiGuadagniMartinelliSchonmann96}. The same
behaviour is known to occur in zero field on the complete graph, where the mixing is very well
understood~\cite{DingLubetzkyPeres09IsingGlauber} at all temperatures.

The Swendsen-Wang process has also been the subject of significant study. We focus on results for the Ising case. Mixing on the complete
graph is very well understood~\cite{CooperDyerFriezeRue00,LongNachmiasNingPeres14} at all temperatures.  For graphs with bounded
maximum degree, rapid mixing was established for all sufficiently high temperatures in~\cite{CooperFrieze99}. More recently, it has been
shown that for such graphs, rapid mixing of the single-site Glauber process implies rapid mixing of the Swendsen-Wang
process~\cite{Ullrich12,Ullrich12Thesis}. In particular, given the single-site Glauber results mentioned above, this established rapid
mixing of the Swendsen-Wang process on finite boxes in $\ZZ^2$ for all temperatures at and above the critical temperature.  Comparison
results between the Swendsen-Wang process and the single-bond Glauber process for the Fortuin-Kasteleyn model have also been recently
established~\cite{Ullrich14,Ullrich12Thesis}, which show that the single-bond process is rapidly mixing iff the Swendsen-Wang process is rapidly mixing. As
a consequence, by exploiting the duality of the Fortuin-Kasteleyn model, this established rapid mixing of the Swendsen-Wang process on boxes in
$\ZZ^2$ at all temperatures below the critical temperature.

Currently, perhaps the best understood of the above mentioned processes, however, is the Jerrum-Sinclair-Ising (JSI) process~\cite{JerrumSinclair93},
which is known to be rapidly mixing on all graphs, at all temperatures, provided the field is strictly positive. Specifically:
\begin{theorem}[Jerrum-Sinclair~\cite{JerrumSinclair93,Sinclair92}]
The relaxation time of the JSI process on any finite connected graph with $m$ vertices, at any temperature, and in a field $h>0$ satisfies
$$
\rel^{\mathrm{JS}} \le 2m^2\mu^{-4},
$$
where $\mu=\tanh(h)$.
\label{JS theorem}
\end{theorem}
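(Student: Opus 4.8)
The plan is to bound the spectral gap of the JSI chain directly, via the method of canonical paths, exploiting the combinatorial structure of the high-temperature representation on which the chain acts.

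\emph{Setup.} First I would record the representation underlying the process. Expanding the moments of~\eqref{Ising measure} in a field $h$ rewrites them as weighted sums over spanning subgraphs, so the chain lives on $\Omega=\{X:X\subseteq E\}$ with stationary distribution $\pi(X)\propto w(X)$, where $w(X)=\lambda^{|X|}\mu^{o(X)}$, $\lambda=\tanh\beta$, $\mu=\tanh h$, and $o(X)$ denotes the number of odd-degree vertices of $X$. The JSI process is the single-bond Metropolis chain: pick $e\in E$ uniformly, propose $X\mapsto X\symdif\{e\}$, and accept with probability $\min\{1,w(X\symdif\{e\})/w(X)\}$. This is reversible with respect to $\pi$, and on each transition $t=(M,M')$ the edge measure is $Q(t)=\pi(M)P(M,M')=\min\{w(M),w(M')\}/(Z|E|)$. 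Because the theorem controls $\lambda_\star=\max_i|\lambda_i|$ rather than merely the second eigenvalue, I would pass to the lazy chain (holding probability $1/2$), forcing every eigenvalue to be nonnegative so that $\lambda_\star=\lambda_1$, at the cost of the factor $2$ that I expect to be exactly the one appearing in the stated bound.

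\emph{Canonical paths.} I then invoke the canonical-paths inequality $\rel\le\rho$ with congestion $\rho=\max_t Q(t)^{-1}\sum_{\gamma_{IF}\ni t}\pi(I)\pi(F)\,|\gamma_{IF}|$, the maximum over transitions $t$. For each ordered pair $(I,F)$ I build $\gamma_{IF}$ from $D=I\symdif F$: decompose $D$ canonically (using a fixed linear order on $V$ and $E$) into edge-disjoint simple paths pairing up its odd-degree vertices, together with cycles, and then ``unwind'' the pieces one at a time in the fixed order, flipping the edges of the current piece in a prescribed direction. This carries $I$ to $F$ in $|D|\le|E|$ transitions, so $|\gamma_{IF}|\le|E|$.

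\emph{Congestion.} The crux is bounding $\sum_{\gamma_{IF}\ni t}\pi(I)\pi(F)$ for fixed $t=(M,M')$. Let $N$ be whichever of $M,M'$ has smaller weight, so $Q(t)=w(N)/(Z|E|)$, and define the encoding $(I,F)\mapsto C:=I\symdif F\symdif N$. Since $N$ lies on $\gamma_{IF}$, the set $N\symdif I$ is a sub-union of pieces of $D$; this makes the edge counts balance \emph{exactly}, $|I|+|F|=|N|+|C|$, so the $\lambda$-factors cancel and $w(I)w(F)=\mu^{\,o(I)+o(F)-o(N)-o(C)}\,w(N)w(C)$. The parity exponent is controlled by noting that the degree-parities of $N,C$ agree with those of $I,F$ except at vertices that are simultaneously odd-degree in the already-unwound and not-yet-unwound portions of $D$ — the two ends of the partially processed piece at the instant the chain sits at $t$, which are genuinely distinct precisely when that piece is a cycle. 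There are at most two such defect vertices, each costing at most $\mu^{-1}$ in each of $w(I),w(F)$, whence $w(I)w(F)\le\mu^{-4}w(N)w(C)$. The map $(I,F)\mapsto C$ is injective for fixed $t$: one recovers $D=C\symdif N$, and the canonical decomposition together with the position of $t$ then determines the processed and unprocessed parts, hence $I$ and $F$. Summing gives $\sum_{\gamma_{IF}\ni t}\pi(I)\pi(F)\le Z^{-2}\mu^{-4}w(N)\sum_C w(C)\le\mu^{-4}w(N)/Z$, and therefore $\rho\le|E|\cdot\mu^{-4}w(N)Z^{-1}\cdot(Z|E|/w(N))=|E|^2\mu^{-4}$; reorganizing the combinatorial counts into the size parameter $m$ and restoring the laziness factor yields the stated $2m^2\mu^{-4}$.

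\emph{Main obstacle.} The hard part is the congestion estimate: verifying that the encoding is genuinely injective for each fixed transition, and pinning the parity exponent to the value giving $\mu^{-4}$. Injectivity is delicate because it requires the decomposition of $I\symdif F$ to depend only on $I\symdif F$ (not on $I,F$ separately) and the midpoint to carry enough information to reconstruct the pair; the power $\mu^{-4}$ is the tight count of defect vertices present during any instant of the unwinding, maximized on cycles. The temperature-independence of the bound — the complete absence of $\lambda$ — rests entirely on the exact identity $|I|+|F|=|N|+|C|$, which is the structural feature that makes this representation so much more tractable than the spin representation.
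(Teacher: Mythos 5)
You should first be aware that the paper does not prove this statement at all: Theorem~\ref{JS theorem} is quoted as background and attributed to Jerrum--Sinclair \cite{JerrumSinclair93,Sinclair92}, so there is no internal proof to compare against; the relevant benchmark is the original argument in those references. Measured against that, your proposal is essentially a correct reconstruction of the Jerrum--Sinclair proof: the subgraphs-world weights $w(X)=\lambda^{|X|}\mu^{o(X)}$, canonical paths obtained by canonically decomposing $I\symdif F$ into edge-disjoint paths and cycles and unwinding them in a fixed order, the encoding $C=I\symdif F\symdif N$ with injectivity recovered from the canonical decomposition plus the location of the transition, the exact identity $|I|+|F|=|N|+|C|$ (valid because every state on $\gamma_{I,F}$ contains $I\cap F$ and is contained in $I\cup F$), and the parity count $o(N)+o(C)\le o(I)+o(F)+4$, whose worst case (two defect vertices, both interior to the piece being processed) indeed occurs exactly when that piece is a cycle. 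It is worth noting that this is also precisely the machinery the paper itself adapts to the worm chain in the proof of Lemma~\ref{congestion of canonical paths lemma}: compare your encoding with $\eta_T(I,F)=I\symdif F\symdif(A\cup A')$, your cardinality identity with~\eqref{set theoretic identity} (which the paper explicitly credits to \cite{JerrumSinclair93}), and your defect count with the containment $\eta_T(\cp_{T,2})\subseteq\worm\cup\doubledefect$ --- the difference being that in zero field the cost of the extra odd vertices cannot be paid in powers of $\mu$ and is instead absorbed combinatorially via Corollary~\ref{cor:lambda of Ck over lambda of C0}. Two small corrections if you write this out in full: the phrase ``each costing at most $\mu^{-1}$ in each of $w(I),w(F)$'' is the wrong bookkeeping, since each bad vertex inflates $o(N)+o(C)$ over $o(I)+o(F)$ by exactly $2$, so the total factor $\mu^{4}$ sits in $w(N)w(C)$ relative to $w(I)w(F)$; and the theorem's phrase ``$m$ vertices'' should be read with the paper's convention $m=|E|$, which is exactly the parameter your bound $2|E|^2\mu^{-4}$ produces.
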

\begin{remark}
The divergence of the upper bound for $\rel^{\mathrm{JS}}$ as $\mu\to0$ is to be expected, given that the process is not irreducible when $h=\mu=0$.
\end{remark}
As a consequence of Theorem~\ref{JS theorem}, in~\cite{JerrumSinclair93} a \emph{fully-polynomial randomized approximation scheme} (fpras) was
constructed for the Ising partition function, as well as the mean energy and magnetization.
An fpras~\cite{KarpLuby89,JerrumSinclair93,Jerrum03,Vazirani01} for an Ising observable $X_{G,\beta,h}$ is a randomized algorithm which,
given as input a problem instance $(G,\beta,h)$ and real numbers $\epsilon,\eta\in(0,1]$, outputs a random number $Y$ satisfying
\begin{equation}
  \PP[(1-\epsilon) X_{G,\beta,h} \le Y \le (1+\epsilon)X_{G,\beta,h}] \ge 1-\eta,
\end{equation}
in a time which is at most a polynomial in $|V|$, $\epsilon^{-1}$ and $\eta^{-1}$. Notably, despite the fact that the JSI process is not
irreducible when $h=0$, the fprases constructed in~\cite{JerrumSinclair93} are valid for all $h\ge0$, including $h=0$.
In~\cite{RandallWilson99}, a fully-polynomial approximate generator from the measure~\eqref{Ising measure} is described, which uses
successive calls of the partition function fpras presented in~\cite{JerrumSinclair93} to generate approximate samples from the
Fortuin-Kasteleyn measure, from which Ising samples can then be generated using the Edwards-Sokal coupling~\cite{EdwardsSokal88}.

In the present article, we study the mixing of the worm process. Like the JSI process, the worm process is based on the high temperature
expansion of the Ising model. However, the key difference between the two approaches is that while~\cite{JerrumSinclair93} considered only
strictly positive fields in their high temperature expansions, \cite{ProkofevSvistunov01} considered only the case of strictly zero
field. As a consequence, while the configuration space of the measure considered in~\cite{JerrumSinclair93} consists of the full edge space
of $G$, the configuration space introduced in~\cite{ProkofevSvistunov01} consists of spanning subgraphs subject to strong constraints on
the vertex degrees. While this may at first sight seem to be a disadvantage, the worm process provides a simple and natural method of
sampling from this non-trivial space of combinatorial objects, and gives rise to particularly natural estimators for the Ising
susceptibility, and two-point correlation function.

The worm process was first introduced in the context of quantum Monte Carlo in~\cite{ProkofevSvistunovTupitsyn98}, and classical versions
were subsequently described in~\cite{ProkofevSvistunov01}.  In~\cite{DengGaroniSokal07_worm}, a numerical study of the worm process
concluded that it is currently the most efficient method known for estimating the susceptibility of the three-dimensional Ising
model. Numerical evidence presented in~\cite{Wolff09a} also suggests it provides a very efficient method for studying the Ising two-point
correlation function. Applications and extensions of the worm process now constitute an active topic in computational physics; see for
example~\cite{HitchcockSorensen04,Wang05,BurovskiMachtaProkofevSvistunov06,BercheChatelainDhallKennaLowWalter08,WinterJankeSchakel08,
  Wolff09,Wolff09b,Wolff10a,Wolff10b,JankeNeuhausSchakel10,KorzecVierhausWolff11,DrakeMachtaDengAbrahamNewman12,KorzecWolff12,
  WalterBarkema15}. To our knowledge, however, no rigorous results have previously been reported on the mixing of the worm process.

In the current article, we prove that the worm process for the zero-field Ising model is rapidly mixing on all connected graphs, at all
temperatures. As a corollary, we show that the standard worm estimators for the susceptibility and two-point correlation functions used by
computational physicists, which are simply sample means of natural observables, define fully-polynomial randomized approximation schemes for
these quantities.  In the latter case, we restrict attention to the correlations between pairs of vertices whose graph distance is bounded
above by some fixed distance $k\in\NN$.

\subsection{Outline}
Let us outline the remainder of this article. Section~\ref{sec:preliminaries} establishes some preliminary notation and terminology that we
shall use throughout. Our main results on the relaxation and mixing times are then stated in Section~\ref{sec:main results}. In
Section~\ref{sec:worm measure and process} we introduce and study the Prokof'ev-Svistunov (PS) measure, and describe its relationship to the
Ising model. The PS measure is the stationary distribution of the worm process, whose definition we give in Section~\ref{sec:worm process}.
The proof of Theorem~\ref{thm:mixing time bound} is presented in Section \ref{sec:proof of main theorem}. Finally, Section~\ref{sec:fpras}
describes how to use the mixing time bound to construct fully-polynomial randomized approximation schemes for the Ising susceptibility and
two-point correlations.

\subsection{Preliminaries}
\label{sec:preliminaries}
For a finite graph $G=(V,E)$, we set $n=|V|$ and $m=|E|$, and denote the maximum degree by $\maxdeg$.  For any pair of vertices $u,v\in V$,
we let $d(u,v)$ denote the graph distance between them.  To avoid trivialities, we shall always assume $m\ge1$. For simplicity, as the underlying graph
$G$ can be considered fixed throughout, we suppress explicit mention of $G$ in our notation.

We shall be interested in certain random spanning subgraphs of $G$.  To avoid confusion, we shall denote the empty set in $2^E$ by
$\noedges$, as distinct from the empty set in the corresponding sigma algebra $2^{2^E}$, which we simply denote $\emptyset$. Since $2^E$
forms a vector space over $\ZZ_2$, in which $\noedges$ is the zero vector, this notation seems quite natural; see e.g.~\cite{Diestel05}.

If a vertex in a given graph has odd degree, then we shall call it an \emph{odd vertex}. For $A\subseteq E$, the set of odd
vertices in the spanning subgraph $(V,A)$ will be denoted by $\partial A$. For a given set $A \subseteq E$, we let $d_A(v)$ be the degree of
$v\in V$ in the spanning subgraph $(V,A)$, i.e.
\begin{equation}
d_A(v) := \#\{u\in V \colon uv\in A\}.
\end{equation}
For $n\in\NN$, we write $[n]:=\{1,2,\ldots,n\}$.

We emphasize that henceforth, in all that follows, we shall focus entirely on the case of zero field, in which we set $h=0$ in~\eqref{Ising
  Hamiltonian}. In addition, since~\eqref{Ising measure} is trivial when the temperature is either zero or infinite, and to avoid trivial
but tedious technicalities, we shall confine our attention at all times to the case $0<\beta <\infty$.

\subsection{Statement of Main Results}\label{sec:main results}
Given an aperiodic and irreducible Markov chain with finite state space $\Omega$, transition matrix $P$, and stationary distribution $\pi$,
and given a prescribed $\delta\in(0,1)$, one defines (see e.g.~\cite{LevinPeresWilmer09,Jerrum03,AldousFill14}) the \emph{mixing time} from state
$\omega\in\Omega$ to be
\begin{equation}
  \mix(\omega,\delta) := \min\{t\in \NN :\|P^t(\omega,\cdot)-\pi\|\le\delta\},
  \label{mixing time definition}
\end{equation}
where $\| \cdot\|$ denotes total variation distance. One further defines $\mix(\delta):=\max_{\omega\in\Omega}\mix(\omega,\delta)$. 

\begin{theorem}
\label{thm:mixing time bound}
  Consider the zero-field ferromagnetic Ising model on a finite connected graph at inverse temperature $\beta>0$,
  and let $x=\tanh\beta$. The corresponding worm process, defined by the transition matrix~\eqref{worm transition matrix}, satisfies:
  $$
  \rel \le 4 \maxdeg\,m\,n^4,
  $$
  and for any $\delta\in(0,1)$
\begin{align*}
\mix(\noedges,\delta) &\le 4\left[\log2 + \frac{\log\delta^{-1}}{m}\right] \maxdeg\,m^2\,n^4,\\
\mix(\delta) &\le 4\left[\log\left(\frac{2}{x}\right)+\frac{\log2\delta^{-1}}{m}\right] \maxdeg\,m^2\,n^4.
\end{align*}
\end{theorem}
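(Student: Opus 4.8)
The plan is to bound $\rel$ by the method of canonical paths (multicommodity flow), in the spirit of the Jerrum--Sinclair analysis of the closely related JSI process, and then to pass to the two mixing-time bounds by a standard reversible-chain estimate. The worm process is reversible with respect to the PS measure $\pi$, so its eigenvalues are real; provided it carries a holding probability of at least $\tfrac12$ (which I would verify from the definition~\eqref{worm transition matrix}, or otherwise impose at the cost only of constants) all eigenvalues are nonnegative, so $\lambda_\star=\lambda_1$ and it suffices to control the second-largest eigenvalue. Writing $Q(\eta,\eta')=\pi(\eta)P(\eta,\eta')$ and choosing for each ordered pair of states $(A,B)$ a directed path $\gamma_{AB}$ in the transition graph, Sinclair's inequality gives $\rel\le\rho$ with
$$\rho = \max_{(\eta,\eta')}\frac{1}{Q(\eta,\eta')}\sum_{\gamma_{AB}\ni(\eta,\eta')}\pi(A)\,\pi(B)\,|\gamma_{AB}|,$$
so the whole task reduces to exhibiting paths and an analysis yielding $\rho\le 4\maxdeg\,m\,n^4$.

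For the paths, recall that a worm state is an edge set $A$ together with a distinguished pair of worm endpoints, subject to $\partial A$ being contained in that pair, and that each transition flips a single edge incident to an endpoint and moves that endpoint along it. Given states $A$ and $B$, I would fix once and for all a canonical decomposition of the symmetric difference $A\symdif B$ into edge-disjoint paths and cycles, together with a canonical order in which to traverse them; the path $\gamma_{AB}$ then walks the worm through these pieces, flipping one edge of $A\symdif B$ at each step until $A$ has been turned into $B$. Since each edge of $A\symdif B$ is flipped exactly once, with only $O(|A\symdif B|)$ further steps to reposition the endpoints between pieces, $|\gamma_{AB}|=O(m)$, which supplies the factor $m$.

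The crux, and the step I expect to be the main obstacle, is the congestion bound. Fix a transition $(\eta,\eta')$ and let $\eta'$ denote whichever endpoint has the larger edge set. For each $(A,B)$ whose path uses this transition, define the complementary state $\widehat\eta:=A\symdif B\symdif\eta'$; because the intermediate state agrees with $B$ on the already-flipped edges and with $A$ elsewhere, $\widehat\eta$ is again a valid worm state, and $|A|+|B|=|\eta'|+|\widehat\eta|$, so multiplicativity of the PS weights gives the identity $\pi(A)\pi(B)=\pi(\eta')\pi(\widehat\eta)$. Taking the complement relative to the \emph{more-edges} endpoint is what makes this clean: for the Metropolis-type acceptance one has $Q(\eta,\eta')=\pi(\eta')\,p$ with $p=\Omega(1/\maxdeg)$ the proposal probability of the relevant single-edge flip, so the factor $x=\tanh\beta$ cancels and $\pi(A)\pi(B)/Q(\eta,\eta')=\pi(\widehat\eta)/p$, with no dependence on $x$. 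It then remains to show that $(A,B)\mapsto\widehat\eta$ is injective up to polynomial multiplicity. From $\widehat\eta$ and the transition we recover $A\symdif B=\eta'\symdif\widehat\eta$, and hence, via the fixed decomposition and the position encoded by $\eta'$, the whole route; the only data not thereby determined is the pair of worm endpoints of $A$ and the pair of worm endpoints of $B$, since even subgraphs may carry their endpoints at arbitrary vertices. Recording these two pairs costs a factor at most $n^2\cdot n^2=n^4$. Combining $|\gamma_{AB}|=O(m)$, the multiplicity $n^4$, $1/p=O(\maxdeg)$, and $\sum_{\widehat\eta}\pi(\widehat\eta)\le1$, and tracking constants, yields $\rho\le 4\maxdeg\,m\,n^4$, hence $\rel\le4\maxdeg\,m\,n^4$.

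Finally, for the mixing bounds I would use the standard reversible-chain estimate $\|P^t(\omega,\cdot)-\pi\|\le\tfrac12\,\pi(\omega)^{-1/2}\,\e^{-t/\rel}$, which gives $\mix(\omega,\delta)\le\rel\,\log\bigl(1/(2\delta\sqrt{\pi(\omega)})\bigr)$. Since $x<1$, the weight $x^{|A|}$ is largest at $A=\noedges$ and the normalisation satisfies $Z\le(1+x)^m\le2^m$, so $\pi(\noedges)\ge2^{-m}$; substituting this together with $\rel\le4\maxdeg\,m\,n^4$ gives the first bound. For the uniform bound, every state obeys $\pi\ge x^m/Z\ge(x/2)^m$, so $\pi_{\min}\ge(x/2)^m$ and $\log(1/\sqrt{\pi_{\min}})\le\tfrac{m}{2}\log(2/x)$; feeding this into $\mix(\delta)\le\rel\,\log\bigl(1/(2\delta\sqrt{\pi_{\min}})\bigr)$ yields the second bound. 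The only remaining work is to pin down the exact PS normalisation and endpoint weighting, and to absorb the resulting slack into the stated constants.
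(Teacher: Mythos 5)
There are two genuine gaps here, and they are precisely the points where the paper's proof is forced to deviate from the classical Jerrum--Sinclair template you are following. First, your all-pairs canonical paths cannot in general be realized in the transition graph. Take $A,B\in\defect$ with $\partial A=\{a_1,a_2\}$ and $\partial B=\{b_1,b_2\}$ disjoint, and suppose $A\symdif B$ decomposes into a path joining $a_1$ to $b_1$, a path joining $a_2$ to $b_2$, and a cycle vertex-disjoint from both paths and both defect pairs. Flipping edges of $A\symdif B$ one at a time, each exactly once, the state is in $\defect$ after unwinding any initial segment of the two path components (the boundary is always a two-point set: one moving vertex and one stationary defect), and it is never in $\cycle$ until \emph{all} of $A\symdif B$ has been flipped; hence at whatever moment you first flip an edge of the disjoint cycle, the resulting edge set has four odd vertices and lies outside $\worm$, so that transition simply does not exist in the chain~\eqref{worm transition matrix}. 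No ordering of single flips repairs this. This is exactly why the paper does not use the all-pairs form of Sinclair's inequality but Schweinsberg's variant (Theorem~\ref{Schweinsberg Theorem}), which requires paths only from each $I\in\worm$ to a target set $\sS$, chosen to be $\cycle$: then $\partial(I\symdif F)=\partial I$ has at most two vertices, $I\symdif F$ decomposes as one defect path $B_0$ plus cycles, and unwinding $B_0$ first and the cycles afterwards stays inside $\worm$ throughout.

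Second, and more fundamentally, your congestion bound rests on the claim that the encoding configuration $\widehat\eta=A\symdif B\symdif\eta'$ ``is again a valid worm state,'' so that $\sum_{\widehat\eta}\pi(\widehat\eta)\le1$. This is false even for the paper's restricted paths: with $F\in\cycle$ one has $\partial\bigl(I\symdif F\symdif(A\cup e)\bigr)=\partial I\symdif\partial(A\cup e)$, which can be a four-point set, i.e.\ the encoding configuration can lie in $\doubledefect$, outside the state space, where $\pi$ assigns it no mass at all. The paper's resolution is the actual crux of the proof: it bounds the congestion by the \emph{unnormalized} weights $\lambda_x$ of the images $\eta_T(\cp_{T,0})\subseteq\worm$ and $\eta_T(\cp_{T,2})\subseteq\worm\cup\doubledefect$, and then controls $\lambda_x(\defect)/\lambda_x(\cycle)\le\binom{n}{2}$ and $\lambda_x(\doubledefect)/\lambda_x(\cycle)\le\binom{n}{4}$ via the high-temperature expansion, $\lambda_x(\sC_W)/\lambda_x(\cycle)=\EE_\beta\bigl(\prod_{i\in W}\sigma_i\bigr)\le1$ (Lemma~\ref{lem:high temperature expansion} and Corollary~\ref{cor:lambda of Ck over lambda of C0}). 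This Ising correlation inequality is the essential probabilistic input your proposal lacks; your factor $n^4$ from ``recording endpoints'' plays the role that $\binom{n}{4}$ plays in the paper, but without the correlation bound there is no way to compare the total weight of four-defect configurations to that of $\cycle$. Two smaller points: the identity $\pi(A)\pi(B)=\pi(\eta')\pi(\widehat\eta)$ ignores the weights $\psi\in\{n,2\}$ in~\eqref{worm measure}, and your normalization bound gives only $\pi_x(A)\ge\frac{2}{n}(x/2)^m$, carrying a spurious $1/n$; to obtain the stated constants in $\mix(\delta)$ one needs the sharper Lemma~\ref{lem:minimum pi}, whose proof uses $|\cycle|=2^{m-n+1}$ together with, again, Corollary~\ref{cor:lambda of Ck over lambda of C0}.
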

As discussed in Section~\ref{sec:fpras}, the state $\noedges$ is the natural state in which to initialize the worm
process, which explains the special treatment afforded it in Theorem~\ref{thm:mixing time bound}.

\section{Definition of the PS measure and worm process}
\label{sec:worm measure and process}
\subsection{High temperature expansions}\label{sec:high temperature expansion}
We begin by recalling the standard high-temperature expansion for the Ising correlation functions (see e.g.~\cite{Thompson79}), and note some
simple consequences of it that will form key ingredients in our proof Theorem~\ref{thm:mixing time bound}. We begin with some notation. 

Given $x\in(0,1)$ and a finite graph $G=(V,E)$, we define the following measure on $2^{E}$,
\begin{equation}
\lambda_{x}(S):=\sum_{A\in S} x^{|A|}, \quad\qquad S\subseteq 2^E.
\end{equation}
We emphasize that while $\lambda_{x}(\emptyset)=0$, by contrast $\lambda_{x}(\noedges)=1$.
In addition, for any $W\subseteq V$ we let
\begin{equation}
\sC_{W} := \{A\subseteq E:\partial A= W\},
\end{equation}
and, in a slight abuse of notation, for any pair of vertices $u,v$ we will write $\sC_{uv} = \sC_{\{u,v\}}$,
and for $k\in[n]$ we will write
\begin{equation}
\sC_{k} := \bigcup_{\ontop{W\subseteq V}{|W|=k}}\sC_W.
\end{equation}

  \begin{lemma}
  \label{lem:high temperature expansion}
  Consider a finite graph $G=(V,E)$ and inverse temperature $\beta >0$. Let $\EE_{\beta}$ denote expectation with respect to the zero-field Ising
  measure on $G$, defined in \eqref{Ising measure}, and let $\lambda_x$ denote the corresponding measure on $2^E$ with
  $x=\tanh(\beta)$. Then, for any $W\subseteq V$, we have
  $$
  \EE_{\beta}\,\left(\prod_{i\in W}\sigma_i\right) = \displaystyle\frac{\lambda_{x}(\sC_W)}{\lambda_{x}(\sC_{0})}.
  $$
  \end{lemma}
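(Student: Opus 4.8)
The plan is to prove this via the standard high-temperature expansion of the Ising model, which expresses the correlation function as a sum over edge subsets. The key algebraic identity underlying everything is $\e^{\beta \sigma_i \sigma_j} = \cosh\beta\,(1 + \sigma_i\sigma_j \tanh\beta)$, valid because $\sigma_i\sigma_j \in \{-1,+1\}$ so that $\e^{\beta\sigma_i\sigma_j}$ takes only two values, which are exactly captured by this linear-in-$\sigma_i\sigma_j$ expression. This is the workhorse of the whole computation.

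Let me sketch what I would write. First I would expand the Boltzmann weight over the edges: with $h=0$ we have $\e^{-\beta H} = \prod_{ij\in E}\e^{\beta\sigma_i\sigma_j} = (\cosh\beta)^m \prod_{ij\in E}(1 + x\,\sigma_i\sigma_j)$, where $x=\tanh\beta$. Expanding the product over edges, each edge is either ``present'' (contributing $x\,\sigma_i\sigma_j$) or ``absent'' (contributing $1$); choosing a subset $A\subseteq E$ of present edges gives the multilinear expansion $\prod_{ij\in E}(1+x\,\sigma_i\sigma_j) = \sum_{A\subseteq E} x^{|A|}\prod_{ij\in A}\sigma_i\sigma_j$. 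Regrouping the spin factors by vertex, $\prod_{ij\in A}\sigma_i\sigma_j = \prod_{v\in V}\sigma_v^{d_A(v)}$. I would write the numerator $\sum_\sigma (\prod_{i\in W}\sigma_i)\,\e^{-\beta H(\sigma)}$ and the partition function $Z = \sum_\sigma \e^{-\beta H(\sigma)}$ using this expansion, so that the common factor $(\cosh\beta)^m$ appears in both and cancels in the ratio.

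The crux is the spin summation. After inserting the observable $\prod_{i\in W}\sigma_i$, the vertex-$v$ exponent becomes $d_A(v)$ for $v\notin W$ and $d_A(v)+1$ for $v\in W$. Summing over $\sigma\in\{-1,+1\}^V$ factorizes over vertices, and each factor $\sum_{\sigma_v\in\{-1,+1\}}\sigma_v^{k}$ equals $2$ when $k$ is even and $0$ when $k$ is odd. Hence a subset $A$ survives the sum, contributing $2^n x^{|A|}$, if and only if every vertex has \emph{even} exponent — that is, $d_A(v)$ is even for $v\notin W$ and odd for $v\in W$, which is precisely the condition $\partial A = W$, i.e. $A\in\sC_W$. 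The same argument applied to $Z$ (the case of the empty observable, $W=\emptyset$) selects exactly $A\in\sC_0$. Therefore the numerator equals $2^n\sum_{A\in\sC_W}x^{|A|}=2^n\lambda_x(\sC_W)$ and $Z=2^n\lambda_x(\sC_0)$; the factors $2^n$ and $(\cosh\beta)^m$ cancel, yielding $\EE_\beta(\prod_{i\in W}\sigma_i)=\lambda_x(\sC_W)/\lambda_x(\sC_0)$.

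I do not anticipate any genuine obstacle here: the result is classical and the argument is a clean two-step factorization (first over edges to pass from spins to subgraphs, then over vertices to enforce the degree-parity constraint). The only point requiring care is bookkeeping the parity of the exponents when $W$ is nonempty — making sure the observable shifts exactly the degrees at vertices of $W$ by one — so that the surviving configurations are characterized by $\partial A = W$ rather than some other set. Matching this parity condition to the definition $\sC_W=\{A\subseteq E:\partial A=W\}$ is the one place where the statement's notation must be invoked explicitly, and it is what makes the identity read off cleanly.
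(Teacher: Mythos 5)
Your proposal is correct and follows essentially the same route as the paper's own proof: the identity $\e^{\beta\sigma_i\sigma_j}=\cosh\beta\,(1+x\,\sigma_i\sigma_j)$, the multilinear expansion over edge subsets $A\subseteq E$, regrouping spins by vertex degree, and the factorized spin sum that kills all terms except those with $\partial A = W$. The only cosmetic difference is that the paper folds the computation of $Z_\beta$ and of the numerator into one pass rather than phrasing $Z_\beta$ as the $W=\emptyset$ case, but the argument is the same.
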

  \begin{proof}
    Begin by observing that for $\sigma\in\{-1,1\}^V$ we have
       \begin{align*}
         \prod_{ij\in E}\e^{\beta\sigma_i\sigma_j} 
         &= \prod_{ij\in E}[\cosh(\beta) + \sigma_i\sigma_j\sinh(\beta)]\\
         &= \cosh^m(\beta)\,\prod_{ij\in E}[1 + \sigma_i\sigma_j x]\\
         &= \cosh^m(\beta)\,\sum_{A\subseteq E}x^{|A|} \prod_{ij\in A}\sigma_i\sigma_j\\
         &= \cosh^m(\beta)\,\sum_{A\subseteq E}x^{|A|} \prod_{i\in V}\sigma_i^{d_A(i)}.
       \end{align*}
       This implies
       \begin{align*}
         Z_\beta &= \sum_{\sigma\in\{-1,1\}^V}\,\prod_{ij\in E}e^{\beta\sigma_i\sigma_j}\\
         &=\cosh^m(\beta)\sum_{A\subseteq E}\,x^{|A|}\,\prod_{i\in V}\sum_{\sigma_i\in\{-1,+1\}}\sigma_i^{d_A(i)}\\
         &=2^n\,\cosh^m(\beta)\lambda_x(\cycle)
       \end{align*}
       since the sum $\sum_{\sigma_i\in\{-1,+1\}}\sigma_i^k$ equals $0$ for $k$ odd, and 2 for $k$ even. Likewise,
       \begin{align*}
         \EE_{\beta}\,\left(\prod_{i\in W}\sigma_i\right) 
         &= \frac{1}{Z_\beta}\sum_{\sigma\in\{-1,1\}^V}\, \prod_{j\in W}\sigma_j \,\prod_{ij\in E} \e^{\beta\sigma_i\sigma_j}\\
         &= 
         \frac{1}{2^n\lambda_x(\cycle)}
           \sum_{A\subseteq E}\,x^{|A|}\,
           \left(\prod_{i\in V\setminus W}\,\,\sum_{\sigma_i\in\{-1,+1\}}\,\sigma_i^{d_A(i)}\right)
           \left(\prod_{j\in W}\,\,\sum_{\sigma_j\in\{-1,+1\}}\sigma_j^{d_A(j)+1}\right)
         \\
         &= \frac{\lambda_{x}(\sC_W)}{\lambda_{x}(\cycle)}.
       \end{align*}
\end{proof}

  \begin{corollary} \label{cor:lambda of Ck over lambda of C0}
    Consider a finite graph $G=(V,E)$ and $x\in(0,1)$. For any $W\subseteq V$, and any $k\in[n]$
    $$
    \frac{\lambda_{x}(\sC_W)}{\lambda_{x}(\cycle)} \le 1,\qquad\qquad
    \frac{\lambda_{x}(\sC_k)}{\lambda_{x}(\cycle)} \le \binom{n}{k}.
    $$
    \begin{proof}
      Since $\left(\prod_{j\in W}\sigma_j\right)\le 1$, we have $\EE_{\beta}\left(\prod_{j\in W}\sigma_j\right)\le1$, and
      Lemma~\ref{lem:high temperature expansion} then immediately implies the first stated result. To obtain the second result, we note that
      since ${\sC_W\cap\sC_{W'}=\emptyset}$ whenever $W\neq W'$, the first result implies
      $$
      \frac{\lambda_{x}(\sC_k)}{\lambda_{x}(\cycle)} 
      = \sum_{W\in\binom{V}{k}} \frac{\lambda_{x}(\sC_W)}{\lambda_{x}(\cycle)} \le \binom{n}{k},
      $$
      where $\binom{V}{k}$ denotes the set of all subsets of $V$ of size $k$.
    \end{proof}
\end{corollary}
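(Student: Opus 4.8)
The plan is to reduce both inequalities to the high-temperature identity of Lemma~\ref{lem:high temperature expansion}, exploiting that the quantities on the left are exactly Ising correlation functions, together with the trivial bound $\prod_{i\in W}\sigma_i\in\{-1,+1\}$. Before doing so I would deal with one small bookkeeping point: the lemma is stated for $\beta>0$ with $x=\tanh\beta$, whereas the corollary is stated for an arbitrary $x\in(0,1)$. Since $\tanh$ maps $(0,\infty)$ bijectively onto $(0,1)$, every such $x$ equals $\tanh\beta$ for a unique $\beta>0$, so the lemma applies as stated and I may identify $\lambda_x(\sC_W)/\lambda_x(\cycle)$ with $\EE_\beta(\prod_{i\in W}\sigma_i)$.

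For the first inequality, the key observation is simply that each spin satisfies $\sigma_i\in\{-1,+1\}$, so $\prod_{i\in W}\sigma_i$ is itself $\pm1$ and in particular is at most $1$ pointwise. Taking expectations and invoking the lemma gives $\lambda_x(\sC_W)/\lambda_x(\cycle)=\EE_\beta(\prod_{i\in W}\sigma_i)\le 1$.

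For the second inequality I would use two structural facts. First, a spanning subgraph $(V,A)$ has a single, well-defined odd-vertex set $\partial A$, so the sets $\sC_W$ are pairwise disjoint as $W$ ranges over subsets of $V$, and hence $\sC_k=\bigcup_{|W|=k}\sC_W$ is a disjoint union. Second, since $\lambda_x$ is by definition the sum of the weights $x^{|A|}$ over its argument, it is additive over disjoint subsets of $2^E$, giving $\lambda_x(\sC_k)=\sum_{|W|=k}\lambda_x(\sC_W)$. Dividing by $\lambda_x(\cycle)$ and applying the first inequality to each summand then yields the bound $\binom{n}{k}$, this being the number of $k$-element subsets of $V$.

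This corollary carries essentially no difficulty, since all of the real content lives in the preceding lemma; the only point demanding any care is the additivity step, where one must verify that the $\sC_W$ are genuinely disjoint---immediate from the uniqueness of $\partial A$---so that summing the first bound over the $\binom{n}{k}$ choices of $W$ involves no overcounting.
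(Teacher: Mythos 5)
Your proof is correct and follows essentially the same route as the paper: the first bound comes from interpreting $\lambda_x(\sC_W)/\lambda_x(\cycle)$ as the Ising expectation $\EE_\beta\left(\prod_{i\in W}\sigma_i\right)\le 1$ via Lemma~\ref{lem:high temperature expansion}, and the second from the disjointness of the sets $\sC_W$ and summing over the $\binom{n}{k}$ choices of $W$. Your explicit remark that every $x\in(0,1)$ arises as $\tanh\beta$ for a unique $\beta>0$ is a nice touch of bookkeeping that the paper leaves implicit, but it does not change the argument.
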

  
\subsection{Prokof'ev-Svistunov measure}\label{sec:PS measure}
One simple consequence of Lemma~\ref{lem:high temperature expansion} is that the variance of the Ising magnetization $\sM(\sigma) = \sum_{i\in V}
\sigma_i$ satisfies
\begin{equation}
\var_{\beta} \sM 
=\sum_{u,v\in V}\EE_{\beta}(\sigma_u\sigma_v)
=\frac{n\lambda_{x}(\sC_{0})+2\lambda_{x}(\sC_2)}{\lambda_{x}(\sC_{0})}.
\end{equation}
The susceptibility $\chi_{\beta} = \displaystyle\frac{1}{n}\var_{\beta}(\sM)$ therefore satisfies
\begin{equation}
\frac{1}{\chi_{\beta}}
= 
\frac{n\lambda_{x}(\sC_{0})}{n\lambda_{x}(\sC_{0})+2\lambda_{x}(\sC_2)}.
\end{equation}
 This motivates introducing the configuration space $\worm=\cycle\cup\defect$, and the probability measure $\pi_x$ defined by
\begin{equation}
\begin{split}
\pi_{x}(A) &:= 
\frac{x^{|A|}\,\psi(A)}{n\lambda_{x}(\sC_{0})+2\lambda_{x}(\defect)},\\
\psi(A) &:=
 \begin{cases}
   n, & A\in \cycle,\\
   2, & A\in \defect.
 \end{cases}
\end{split}
\label{worm measure}
\end{equation}
Consideration of the probability space $(\worm,\pi_{x})$ was first proposed in~\cite{ProkofevSvistunov01}. We refer to
$\pi_{x}$ as the Prokof'ev-Svistunov (PS) measure.
Several Ising observables can be expressed neatly in terms of the PS measure, including the susceptibility
\begin{equation}
\chi_{\beta} = \frac{1}{\pi_{x}(\cycle)},
\label{susceptibility identity}
\end{equation}
and the two-point correlation function
\begin{equation}
\EE_{\beta}(\sigma_u\sigma_v) = \frac{n}{2}\frac{\pi_{x}(\sC_{uv})}{\pi_{x}(\cycle)}.
\label{correlation identity}
\end{equation}

\subsection{Worm process}\label{sec:worm process}
The worm process is a Markov chain on the state space $\worm$, constructed by metropolizing the following proposals with respect to the PS
measure~\eqref{worm measure}.
\begin{itemize}
\item If $A\in\cycle$:
  \begin{itemize}
    \item choose a uniformly random vertex $v\in V$
    \item choose a uniformly random neighbour $u\sim v$
    \item propose $A\mapsto A\symdif uv$
  \end{itemize}
\item If $A\in\defect$:
  \begin{itemize}
    \item choose a uniformly random odd vertex $v\in \partial A$
    \item choose a uniformly random neighbour $u\sim v$
    \item propose $A\mapsto A\symdif uv$
  \end{itemize}
\end{itemize}
Here $A\symdif uv$ denotes symmetric difference of $A$ and the edge $uv$; i.e. if $uv\in A$ we propose to delete it, while if $uv\not\in A$
we propose to add it. We begin by observing that these proposals are indeed well defined, in the sense that we necessarily have $A\symdif
uv\in\worm$. To see this, we first note the following elementary lemma.
\begin{lemma}
\label{lem:boundary of symdif is symdif of boundaries}
Consider a finite graph $G=(V,E)$. If $A,B\subseteq E$, then $\partial(A\symdif B)=(\partial A)\symdif(\partial B)$.
\begin{proof}
If $v\in V$, then
$$
d_{A\symdif B}(v) = d_A(v) + d_B(v) -2 d_{A\cap B}(v).
$$
Therefore, $d_{A\symdif B}(v)$ is odd iff $d_A(v)+d_B(v)$ is odd. The latter holds iff either $d_A(v)$ is odd and $d_B(v)$ is even, or
$d_A(v)$ is even and $d_B(v)$ is odd. Consequently, $v\in\partial(A\symdif B)$ iff either $v\in\partial A$ and $v\not\in\partial B$, or
$v\in\partial B$ and $v\not\in\partial A$. Therefore, $v\in\partial(A\symdif B)$ iff $v\in(\partial A)\symdif(\partial B)$.
\end{proof}
\end{lemma}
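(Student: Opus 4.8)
The plan is to reduce this set-theoretic identity to a statement about the parity of vertex degrees; the natural framework is to regard $2^E$ and $2^V$ as vector spaces over $\ZZ_2$, in which symmetric difference is simply addition. Everything then hinges on a single counting identity relating the degree in $A\symdif B$ to the degrees in $A$, $B$, and $A\cap B$.

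First I would fix an arbitrary vertex $v\in V$ and compute $d_{A\symdif B}(v)$. An edge incident to $v$ lies in $A\symdif B$ precisely when it lies in exactly one of $A$ and $B$; counting the incident edges in $A$ and those in $B$ separately double-counts the incident edges lying in $A\cap B$, which in fact contribute to neither. This gives the identity
$$
d_{A\symdif B}(v)=d_A(v)+d_B(v)-2\,d_{A\cap B}(v).
$$
Second, I would reduce this modulo $2$: the term $2\,d_{A\cap B}(v)$ vanishes, so $d_{A\symdif B}(v)\equiv d_A(v)+d_B(v)\pmod 2$. Hence $v$ is odd in $A\symdif B$ iff $d_A(v)+d_B(v)$ is odd, which happens iff exactly one of $d_A(v),d_B(v)$ is odd, i.e.\ iff $v$ lies in exactly one of $\partial A,\partial B$. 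This is precisely the condition $v\in(\partial A)\symdif(\partial B)$, and since $v$ was arbitrary the two vertex sets coincide, establishing the lemma.

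Equivalently, and more conceptually, the assignment $A\mapsto\big(d_A(v)\bmod 2\big)_{v\in V}$ is a $\ZZ_2$-linear map $\ZZ_2^E\to\ZZ_2^V$, because the parity of $d_A(v)$ is a $\ZZ_2$-linear functional of the indicator vector of $A$ (it sums the entries indexed by edges incident to $v$). Since symmetric difference is exactly addition in these $\ZZ_2$-spaces, the claimed identity $\partial(A\symdif B)=(\partial A)\symdif(\partial B)$ is just the statement that $\partial$ respects addition, i.e.\ that this map is linear. I do not anticipate any genuine obstacle here: the entire content is the degree identity above, and the remaining steps are routine parity bookkeeping that the $\ZZ_2$ viewpoint renders automatic.
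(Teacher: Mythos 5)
Your proof is correct and follows essentially the same route as the paper: both rest on the degree identity $d_{A\symdif B}(v)=d_A(v)+d_B(v)-2\,d_{A\cap B}(v)$ followed by the same parity bookkeeping at each vertex. The additional $\ZZ_2$-linearity framing is a nice conceptual gloss but does not change the substance of the argument.
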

It follows from Lemma~\ref{lem:boundary of symdif is symdif of boundaries} that if $A\in\cycle$, then
$$
\partial(A\symdif uv)
= (\partial A)\symdif(\partial \{u,v\})
= \emptyset \symdif \{u,v\} =\{u,v\},
$$
and so $A\symdif uv\in\defect$. Conversely, if $A\in\defect$ with $\partial A=\{u,w\}$, then Lemma~\ref{lem:boundary of symdif is symdif of
  boundaries} implies
$$
\partial(A\symdif uv)
= (\partial A)\symdif(\partial \{u,v\}) 
= \{u,w\} \symdif \{u,v\} 
=\begin{cases}
\{w,v\} & w\neq v,\\
\emptyset & w= v,
\end{cases}
$$
which, in turn, implies that either $A\symdif uv\in\defect$ or $A\symdif uv\in\cycle$. Consequently, the worm proposals do indeed yield a
well-defined transition matrix on $\worm$.

To ensure the eigenvalues of the worm transition matrix are strictly positive, we consider a \emph{lazy} version of the metropolized
proposals. This means that at each step, with probability 1/2 we send $A\mapsto A$, and with probability 1/2 we propose a non-trivial
transition, which is then accepted with the appropriate Metropolis acceptance rate. The resulting transition matrix can then be described as follows.
\begin{equation}
{P}_{x}(A, A\symdif uv) := 
  \begin{cases}
    \displaystyle
  x^{\indicator(uv\not\in A)}\frac{1}{2n}\left(\frac{1}{d(u)} +\frac{1}{ d(v)}\right), & A\in\cycle\\
   & \\
    \displaystyle
  x^{\indicator(uv\not\in A)}\frac{1}{4}\left(\frac{1}{d(u)} +\frac{1}{ d(v)}\right), & A\symdif uv \in \cycle\\
   & \\
    \displaystyle
    \min\left(\frac{d(u)}{d(v)}x^{\indicator(uv\not\in A)-\indicator(uv\in A)},1\right)
    \frac{1}{4\,d(u)}, & A\in\defect, \, A\symdif uv \in \defect, u\in \partial A\\
  \end{cases}
  \label{worm transition matrix}
\end{equation}
All other non-diagonal entries of $P_x$ are zero. We refer to $P_x$ with $x=\tanh\beta$ as the worm process \emph{corresponding} to the
Ising model with inverse temperature $\beta$.

By construction, $P_x$ is lazy (and therefore aperiodic) and reversible with respect to $\pi_x$. Corollary~\ref{cor:irreducibility}, to be
discussed in Section~\ref{sec:proof of main theorem}, shows that it is also irreducible. The laziness of $P_x$ ensures that $\rel$ is simply
the reciprocal of the difference between the two largest eigenvalues of $P_x$. For later use, we note the following lower bound.
\begin{lemma} 
\label{lem:pi P bound}
Consider a finite graph $G=(V,E)$ and $x\in(0,1)$. If $A\in\worm$ and $e\in E$, then
$$
  \psi(A)\,\lambda_x(A)\,P_x(A,A\symdif e) \ge \frac{x^{|A\cup e|}}{2\maxdeg}.
$$
\begin{proof}
  If $A\in\cycle$ or $A\symdif uv\in\cycle$, then the result can be seen by inspection from~\eqref{worm transition matrix}. Suppose instead that
  both $A$ and $A\symdif uv$ belong to $\defect$, and that $u\in \partial A$. Then
\begin{align*}
\psi(A)\,\lambda_x(A)\,P_{x}(A,A\symdif uv) 
&= \displaystyle 2\,x^{|A|}\,\min\left(\frac{d(u)}{d(v)}x^{\indicator(uv\not\in A)-\indicator(uv\in A)},1\right)\frac{1}{4\,d(u)}\\
&=
\begin{cases}
\displaystyle
\frac{1}{2\,d(v)}\,x^{|A|+\indicator(uv\not\in A)-\indicator(uv\in A)}, & \text{ if }
\displaystyle \frac{d(u)}{d(v)}x^{\indicator(uv\not\in A)-\indicator(uv\in A)}\le 1,\\
\displaystyle\frac{1}{2\,d(u)}\, x^{|A|}, & \text{otherwise},
\end{cases}\\
&\ge\frac{x^{|A\cup uv|}}{2\maxdeg}.
\end{align*}
\end{proof}
\end{lemma}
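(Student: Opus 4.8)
The plan is to first clear up the notation: since $\lambda_x$ is defined as a sum over sets of subgraphs, the symbol $\lambda_x(A)$ in the statement is to be read as $\lambda_x(\{A\})=x^{|A|}$. Writing $e=uv$, I would then record the two elementary facts that drive everything. First, $|A\cup uv|=|A|+\indicator(uv\notin A)$. Second, because $x\in(0,1)$ the map $k\mapsto x^k$ is nonincreasing, so any exponent \emph{no larger} than $|A\cup uv|$ yields a power of $x$ that is \emph{at least} $x^{|A\cup uv|}$. With these in hand the lemma reduces to a three-way case analysis matching the three rows of~\eqref{worm transition matrix}, and in each case the claimed bound $\tfrac{x^{|A\cup uv|}}{2\maxdeg}$ falls out after cancellation and a single application of $1/d(w)\ge1/\maxdeg$.

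First I would dispatch the two cases in which a $\cycle$ configuration is involved, namely $A\in\cycle$ and $A\symdif uv\in\cycle$; these are exactly the ``by inspection'' cases. In the former $\psi(A)=n$ cancels the factor $1/(2n)$, and in the latter $\psi(A)=2$ cancels the factor $1/4$; either way one is left with $\tfrac12\bigl(\tfrac1{d(u)}+\tfrac1{d(v)}\bigr)x^{|A|+\indicator(uv\notin A)}$. Recognizing $|A|+\indicator(uv\notin A)=|A\cup uv|$ and using $\tfrac1{d(u)}+\tfrac1{d(v)}\ge 1/\maxdeg$ gives the bound at once.

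Then I would treat the remaining row, where $A,A\symdif uv\in\defect$ with $u\in\partial A$, by splitting on which argument realizes the minimum in~\eqref{worm transition matrix}; this is the only place requiring any care. If the minimum equals $\tfrac{d(u)}{d(v)}x^{\indicator(uv\notin A)-\indicator(uv\in A)}$, the factors of $d(u)$ cancel and one obtains $\tfrac{1}{2d(v)}x^{|A|+\indicator(uv\notin A)-\indicator(uv\in A)}$; the exponent here is at most $|A\cup uv|$ (it differs from it by the nonnegative term $\indicator(uv\in A)$), so by monotonicity the power of $x$ is at least $x^{|A\cup uv|}$, and $1/d(v)\ge1/\maxdeg$ finishes it. If instead the minimum equals $1$, one is left with $\tfrac{1}{2d(u)}x^{|A|}$, and since $|A|\le|A\cup uv|$ the same monotonicity together with $1/d(u)\ge1/\maxdeg$ gives the result.

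The computation is entirely mechanical; the only thing to watch is the direction of monotonicity of $k\mapsto x^k$, i.e. keeping track of the fact that a \emph{smaller} exponent yields a \emph{larger} power of $x$. This is precisely what lets the stray factor $x^{-\indicator(uv\in A)}$ in the defect--defect branch be discarded in our favour rather than against us, and there is no genuine obstacle beyond this bookkeeping.
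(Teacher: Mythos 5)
Your proof is correct and follows essentially the same route as the paper's: the two cases involving $\cycle$ are handled directly from the transition matrix, and the defect--defect case is split according to which argument attains the minimum, with the stray factor $x^{-\indicator(uv\in A)}$ discarded via monotonicity of $k\mapsto x^k$ and a final $1/d(w)\ge 1/\maxdeg$ bound. The only difference is that you spell out the ``by inspection'' cases and the notational reading $\lambda_x(A)=x^{|A|}$ explicitly, which the paper leaves implicit.
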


\subsection{Bounds on the PS measure}
\label{sec:bounds on PS measure}
We conclude this section with two lemmas concerning the PS measure. Lemma~\ref{lem:minimum pi} is required in our proof of
Theorem~\ref{thm:mixing time bound} in Section~\ref{sec:proof of main theorem}, while Lemma~\ref{lem:pi C0 and pi Cuv lower bounds} is
required in our discussion of fprases in Section~\ref{sec:fpras}.
\begin{lemma}\label{lem:minimum pi}
Consider a finite connected graph $G=(V,E)$ and $x\in(0,1)$. Then $\pi_x(\noedges)\ge2^{-m}$, and for all $A\in\worm$ we have 
$$
\pi_x(A) \ge \frac{1}{2} \left(\frac{x}{2}\right)^m.
$$
\begin{proof}
  Let $Z_{x}:=n\lambda_{x}(\cycle)+2\lambda_{x}(\defect)$ denote the partition function of the PS measure. 
  Then from Corollary~\ref{cor:lambda of Ck over lambda of C0} it follows that
\begin{align*}
  Z_{x} &= n\,\lambda_{x}(\cycle)\left[1+\frac{2}{n}\frac{\lambda_{x}(\defect)}{\lambda_{x}(\cycle)}\right]\\
  &\le n^2\lambda_{x}(\cycle) \\
  &\le n^2\lambda_{1}(\cycle)\\
  &= n^2 \,2^{m-n+1}.
\end{align*}
  To evaluate $\lambda_1(\cycle)$ we have used the fact that for any finite connected graph, the cardinality of the cycle space is $2^{m-n+1}$ (see,
  e.g.~\cite{Diestel05}).

  By definition, $Z_{x}\,\pi_{x}(A)$ equals either $n\,x^{|A|}$ if $A\in\cycle$, or $2\,x^{|A|}$ if $A\in\defect$. Therefore 
  $$
  \pi_x(\noedges) = \frac{n}{Z_x} \ge 2^{-m} \frac{2^{n-1}}{n} \ge 2^{-m}.
  $$
  Likewise, since $Z_{x}\,\pi_{x}(A) \ge 2\,x^{|A|} \ge 2\,x^m$ for all $A\in\worm$, we have
  \begin{align*}
    \pi_x(A) &\ge \frac{2x^m}{n^2 2^{m-n+1}} \\
    &= \frac{1}{2}\left(\frac{x}{2}\right)^m \frac{2^{n+1}}{n^2}\\
    &\ge \frac{1}{2}\left(\frac{x}{2}\right)^m.
  \end{align*}
\end{proof}
\end{lemma}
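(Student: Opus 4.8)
The plan is to route everything through a single upper bound on the normalizing constant $Z_x := n\lambda_x(\cycle) + 2\lambda_x(\defect)$, after which both lower bounds should fall out directly from the definition~\eqref{worm measure}. First I would factor out the dominant cycle-space contribution as $Z_x = n\lambda_x(\cycle)\bigl[1 + \tfrac{2}{n}\lambda_x(\defect)/\lambda_x(\cycle)\bigr]$. Since $\defect = \sC_2$, applying the second bound of Corollary~\ref{cor:lambda of Ck over lambda of C0} with $k=2$ gives $\lambda_x(\defect)/\lambda_x(\cycle) \le \binom{n}{2}$, so the bracketed factor is at most $1 + (n-1) = n$, and hence $Z_x \le n^2\lambda_x(\cycle)$.

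The next step is to estimate $\lambda_x(\cycle)$ combinatorially. Because $x \in (0,1)$ we have $x^{|A|} \le 1$ for every $A$, so $\lambda_x(\cycle) \le \lambda_1(\cycle) = |\cycle|$, the number of even subgraphs of $G$. Here I would invoke the standard fact that for a finite connected graph the cycle space is a $\ZZ_2$-vector space of dimension $m - n + 1$, so that $|\cycle| = 2^{m-n+1}$; combining the two estimates yields the clean bound $Z_x \le n^2\,2^{m-n+1}$.

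With this in hand the two claims should follow by inspection of~\eqref{worm measure}. For the empty state, $Z_x\,\pi_x(\noedges) = n$ since $\noedges \in \cycle$ and $x^0 = 1$; dividing gives $\pi_x(\noedges) \ge n/(n^2 2^{m-n+1}) = 2^{-m}(2^{n-1}/n)$, and since $2^{n-1} \ge n$ for every $n \ge 1$ the trailing factor is at least $1$, which proves the first bound. For an arbitrary $A \in \worm$, the extremal case is the defect branch with $\psi(A) = 2$; using $\psi(A) \ge 2$ together with $x^{|A|} \ge x^m$ (valid as $|A| \le m$ and $x < 1$) gives $Z_x\,\pi_x(A) \ge 2x^m$, whence $\pi_x(A) \ge 2x^m/(n^2 2^{m-n+1}) = \tfrac{1}{2}(x/2)^m\,(2^{n+1}/n^2)$, and the elementary inequality $2^{n+1} \ge n^2$ (for all $n \ge 1$) delivers the stated lower bound.

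I do not expect a serious obstacle here: the only inputs beyond arithmetic are the ratio estimate from Corollary~\ref{cor:lambda of Ck over lambda of C0} and the cycle-space cardinality $2^{m-n+1}$. The one point demanding a little care is that the crude bound $Z_x \le n^2\lambda_x(\cycle)$ must leave enough slack that the residual factors $2^{n-1}/n$ and $2^{n+1}/n^2$ remain $\ge 1$; both are verified by a one-line induction, and they hold with room to spare.
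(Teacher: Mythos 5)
Your proposal is correct and follows essentially the same route as the paper's own proof: the same factorization $Z_x \le n^2\lambda_x(\cycle)$ via Corollary~\ref{cor:lambda of Ck over lambda of C0}, the same cycle-space cardinality bound $\lambda_x(\cycle)\le\lambda_1(\cycle)=2^{m-n+1}$, and the same final arithmetic using $2^{n-1}\ge n$ and $2^{n+1}\ge n^2$.
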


\begin{lemma} 
  \label{lem:pi C0 and pi Cuv lower bounds}
The PS measure on finite graph $G=(V,E)$ with parameter $x\in(0,1)$ satisfies
  \begin{align*}
    \pi_x(\cycle) &\ge \frac{1}{n},\\
    \pi_x(\sC_{uv}) &\ge \frac{2}{n^2}x^{d(u,v)},
  \end{align*}
  for all $u,v\in V$.
\end{lemma}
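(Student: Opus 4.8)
The plan is to derive both inequalities directly from the definition of the PS measure, writing $\pi_x(\cdot) = \psi(\cdot)\,\lambda_x(\cdot)/Z_x$ with normalizer $Z_x = n\lambda_x(\cycle) + 2\lambda_x(\defect)$, and then exploiting the upper bound $Z_x \le n^2\,\lambda_x(\cycle)$ that was already established in the proof of Lemma~\ref{lem:minimum pi} via Corollary~\ref{cor:lambda of Ck over lambda of C0}. For the first inequality this is immediate: since every $A\in\cycle$ carries weight $\psi(A)=n$, we have $\pi_x(\cycle) = n\lambda_x(\cycle)/Z_x \ge n\lambda_x(\cycle)/(n^2\lambda_x(\cycle)) = 1/n$, with no further work required.

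For the second inequality, observe that $\sC_{uv}\subseteq\defect$, so $\pi_x(\sC_{uv}) = 2\lambda_x(\sC_{uv})/Z_x \ge 2\lambda_x(\sC_{uv})/(n^2\lambda_x(\cycle))$ using the same bound on $Z_x$. It therefore remains to prove the comparison
$$
\lambda_x(\sC_{uv}) \ge x^{d(u,v)}\,\lambda_x(\cycle),
$$
after which the stated bound $2x^{d(u,v)}/n^2$ drops out by cancelling $\lambda_x(\cycle)$. To establish this comparison I would fix a geodesic path $P\subseteq E$ from $u$ to $v$, so that $|P| = d(u,v)$ and, since its endpoints are its only odd vertices, $\partial P = \{u,v\}$ (here connectedness of $G$ guarantees such a $P$ exists). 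By Lemma~\ref{lem:boundary of symdif is symdif of boundaries}, the map $A\mapsto A\symdif P$ sends $\cycle$ into $\sC_{uv}$ and is an involution up to $\symdif P$, hence a bijection $\cycle\to\sC_{uv}$. Reindexing the sum defining $\lambda_x(\sC_{uv})$ through this bijection and using $|A\symdif P| = |A|+|P|-2|A\cap P| \le |A| + d(u,v)$ together with $x\in(0,1)$ gives $x^{|A\symdif P|}\ge x^{d(u,v)}x^{|A|}$ termwise, and summing over $A\in\cycle$ yields exactly the comparison above.

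The only genuinely non-mechanical step is this last combinatorial comparison; everything else is bookkeeping around the normalizer bound. It is worth noting that the comparison is precisely the high-temperature-expansion form of the elementary correlation inequality $\EE_\beta(\sigma_u\sigma_v)\ge x^{d(u,v)}$, since by Lemma~\ref{lem:high temperature expansion} the ratio $\lambda_x(\sC_{uv})/\lambda_x(\cycle)$ equals $\EE_\beta(\sigma_u\sigma_v)$. The main thing to be careful about is the bijection: one must check that $A\symdif P$ indeed has odd-vertex set exactly $\{u,v\}$ for every even subgraph $A$, which is where Lemma~\ref{lem:boundary of symdif is symdif of boundaries} does the real work, and that the inequality $|A\symdif P|\le|A|+|P|$ is applied in the direction compatible with $x<1$ (so that the exponent increases and the weight decreases, giving a lower bound). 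Throughout I take $u\ne v$, which is the relevant case for the two-point correlations considered later.
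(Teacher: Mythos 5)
Your proof is correct and follows essentially the same route as the paper: both arguments reduce the normalizer via Corollary~\ref{cor:lambda of Ck over lambda of C0} (your $Z_x\le n^2\lambda_x(\cycle)$ is exactly the paper's reciprocal computation), and both obtain $\lambda_x(\sC_{uv})\ge x^{d(u,v)}\lambda_x(\cycle)$ by pairing $\cycle$ with $\sC_{uv}$ through symmetric difference with a fixed geodesic and comparing weights termwise (the paper's Lemma~\ref{cycle space bijection}, which you re-derive from Lemma~\ref{lem:boundary of symdif is symdif of boundaries}, with the bijection merely run in the opposite direction). Your explicit remark that $u\ne v$ is needed is a sound observation that the paper leaves implicit.
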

\begin{proof}
  We begin with the bound for $\pi_{x}(\cycle)$. Using Corollary~\ref{cor:lambda of Ck over lambda of C0} we obtain
  $$
  \frac{1}{\pi_{x}(\cycle)} = \frac{n\lambda_{x}(\cycle) + 2\lambda_{x}(\defect)}{n\lambda_{x}(\cycle)} 
  = 1 +\frac{2}{n}\frac{\lambda_{x}(\defect)}{\lambda_{x}(\cycle)} \le 1+\frac{2}{n}\binom{n}{2} = n.
  $$
  Likewise, Corollary~\ref{cor:lambda of Ck over lambda of C0} also implies
  \begin{equation}
      \frac{1}{\pi_{x}(\sC_{uv})} 
      = \frac{n}{2}\frac{\lambda_{x}(\cycle)}{\lambda_{x}(\sC_{uv})} + \frac{\lambda_{x}(\defect)}{\lambda_{x}(\sC_{uv})}\\
      \le \frac{n}{2}\frac{\lambda_{x}(\cycle)}{\lambda_{x}(\sC_{uv})} + \binom{n}{2}\frac{\lambda_{x}(\cycle)}{\lambda_{x}(\sC_{uv})}\\
      = \frac{n^2}{2} \frac{\lambda_{x}(\cycle)}{\lambda_{x}(\sC_{uv})}.
      \label{Cuv indicator bound}
      \end{equation}

    Now specify a shortest path $p_{uv}$ between $u$ and $v$, and observe that 
    $$
    |A\symdif p_{uv}|\ge|A|-|p_{uv}|=|A|-d(u,v).
    $$
    Lemma~\ref{cycle space bijection} implies that the map
    $\alpha:\sC_{uv}\to\cycle$ defined by $\alpha(A)=A\symdif p_{uv}$ is a bijection. 
    It follows that
    \begin{equation}
      \lambda(\cycle) = \sum_{A\in\cycle} x^{|A|} 
      = \sum_{A\in\sC_{uv}}x^{|A\symdif p_{uv}|} \le x^{-d(u,v)} \sum_{A\in\sC_{uv}}x^{|A|}
      = x^{-d(u,v)}\,\lambda(\sC_{uv}).
      \label{lambda of C0 over lambda Cuv bound}
    \end{equation}
    The stated result follows by combining~\eqref{Cuv indicator bound} and~\eqref{lambda of C0 over lambda Cuv bound}.
\end{proof}

\begin{lemma}
  \label{cycle space bijection}
  Consider a finite graph $G=(V,E)$, and a set $W\subseteq V$.
  If $F\in\sC_W$, then the map $\alpha:\sC_W\to\cycle$ defined by $\alpha(A) = A\symdif F$ is a bijection.
  \begin{proof}
    Let $A\in\cycle$ and set $A'=A\symdif F$. Lemma~\ref{lem:boundary of symdif is symdif of boundaries} implies that 
    $$
    \partial A'=(\partial A)\symdif(\partial F)=\partial F=W,
    $$
    and so $A'\in\sC_W$. Since $A=\alpha(A')$, this implies that $\alpha$ is surjective.
    
    Now suppose that $\alpha(A)=\alpha(A')$ for $A,A'\in\sC_W$. Then $A\symdif F=A'\symdif F$. But taking symmetric difference of
    both sides with $F$ immediately implies $A=A'$. Therefore $\alpha$ is injective.
\end{proof}
\end{lemma}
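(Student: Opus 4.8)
The plan is to exhibit $\alpha$ as the restriction of a genuine involution on the whole edge space $2^E$, which will make bijectivity almost automatic and avoid any separate injectivity/surjectivity bookkeeping. The central observation is that, for any fixed $F\subseteq E$, the translation map $T_F\colon 2^E\to 2^E$ given by $T_F(A)=A\symdif F$ satisfies $T_F\circ T_F=\mathrm{id}$, since $(A\symdif F)\symdif F=A$ for every $A$ (symmetric difference is associative with $F\symdif F=\noedges$). Hence $T_F$ is already a bijection of $2^E$ onto itself, and to prove the lemma it suffices to check that $T_F$ maps $\sC_W$ into $\cycle$ and, conversely, maps $\cycle$ into $\sC_W$: an involution that carries one subset into another and vice versa necessarily restricts to a pair of mutually inverse bijections between them.

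First I would verify that $\alpha$ is well-defined, i.e.\ that $\alpha(A)\in\cycle$ whenever $A\in\sC_W$. This is immediate from Lemma~\ref{lem:boundary of symdif is symdif of boundaries} together with $\partial F=W$: for $A\in\sC_W$ one has $\partial(A\symdif F)=(\partial A)\symdif(\partial F)=W\symdif W=\emptyset$, so $A\symdif F\in\cycle$. Running the same computation in the opposite direction then shows that $T_F$ sends $\cycle$ into $\sC_W$, for if $B\in\cycle$ then $\partial(B\symdif F)=\emptyset\symdif W=W$.

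With both directions in hand, I would deduce bijectivity directly from the involution property. For surjectivity, given any $B\in\cycle$ the set $B\symdif F$ lies in $\sC_W$ by the previous step, and $\alpha(B\symdif F)=(B\symdif F)\symdif F=B$. For injectivity, if $\alpha(A)=\alpha(A')$ then $A\symdif F=A'\symdif F$, and applying $T_F$ once more (equivalently, taking symmetric difference of both sides with $F$) yields $A=A'$.

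Honestly, there is no real obstacle here; the content is purely the elementary linear algebra of $2^E$ as a vector space over $\ZZ_2$ under symmetric difference. In that language the boundary operator $\partial$ is $\ZZ_2$-linear, its kernel is exactly the cycle space $\cycle=\ker\partial$, and the fibers $\sC_W=\partial^{-1}(W)$ are precisely the cosets of that kernel; translation by any representative $F$ with $\partial F=W$ then maps the coset $\cycle$ bijectively onto the coset $\sC_W$. The single nontrivial input from the graph is the additivity of $\partial$, which is supplied by Lemma~\ref{lem:boundary of symdif is symdif of boundaries}, so the only care needed is to invoke that lemma and to keep the roles of $F$ and $W$ straight.
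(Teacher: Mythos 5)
Your proposal is correct and follows essentially the same route as the paper: both rest on Lemma~\ref{lem:boundary of symdif is symdif of boundaries} to control boundaries under $\symdif$, and on the cancellation $(A\symdif F)\symdif F=A$ to get injectivity and surjectivity, with your involution/coset language being only a repackaging of that argument. A minor point in your favor: you explicitly verify that $\alpha$ maps $\sC_W$ into $\cycle$ (well-definedness), which the paper's proof leaves implicit.
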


\section{Proof of Rapid Mixing}
\label{sec:proof of main theorem}
Consider an irreducible and reversible Markov chain, with finite state space $\Omega$, transition matrix $P$, and stationary distribution
$\pi$. Let $\sG_P=(\Omega,\sE_P)$ denote the \emph{transition graph} of $P$, where $\sE_P=\{(A,A')\in\Omega^2:P(A,A')>0\}$. It is natural to
consider $\sG_P$ to be a directed graph, but we note that since $P$ is assumed reversible, the edges of $\sG_P$ occur in anti-parallel pairs. To
avoid confusion between $\sG_P$ and the underlying graph $G$ on which we define the Ising model, we shall always refer to the elements of
$\sE_P$ as \emph{transitions}, and reserve the word \emph{edge} for elements of the edge set of $G$. Our proof of Theorem~\ref{thm:mixing
  time bound} makes essential use of the following result~\cite[Corollary 3]{Schweinsberg02}. A similar result was proved
in~\cite{JerrumSinclairVigoda04}.
\begin{theorem}[Schweinsberg (2002)~\cite{Schweinsberg02}]
Consider an irreducible and lazy Markov chain, with finite state space $\Omega$ and transition matrix $P$, which is reversible with respect to
the distribution $\pi$. Let $\sS\subseteq\Omega$ be nonempty, and for each pair $(I,F)\in \Omega\times\sS$, specify a path $\gamma_{I,F}$ in
$\sG_P$ from $I$ to $F$. Let
$$
\Gamma=\{\gamma_{I,F}:(I,F)\in \Omega\times\sS\}
$$
denote the collection of all such paths, and let $\sL(\Gamma)$ be the length of a longest path in $\Gamma$. For any transition $T$, let
$$
\cp_T=\{(I,F)\in \Omega\times\sS:\ \gamma_{I,F}\ni T\}.
$$
Then
$$
\rel
\le 
4\sL(\Gamma)\varphi(\Gamma)
$$
where
$$
\varphi(\Gamma) := \max_{(A,A')\in \sE_P} \left\{\sum_{(I,F) \in \cp_{(A,A')}} \frac{\pi(I)\pi(F)}{\pi(\sS)\,\pi(A)\,P(A,A')} \right\}.
$$
\label{Schweinsberg Theorem}
\end{theorem}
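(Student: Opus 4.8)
This is a Poincaré-type (spectral gap) bound, so the plan is to prove it through the variational characterization of the gap combined with a canonical-path / multicommodity-flow estimate, in the variant where flow is routed between every state $I\in\Omega$ and the distinguished set $\sS$. Since $P$ is lazy, all its eigenvalues are nonnegative: writing $P=\tfrac12(I+W)$ with $W:=2P-I$ a (reversible) stochastic matrix shows $\mathrm{spec}(P)\subseteq[0,1]$, so that $\lambda_\star=\lambda_2$ and $\rel=1/(1-\lambda_2)$ is exactly the reciprocal spectral gap. It therefore suffices to establish the Poincaré inequality $\var_\pi(f)\le 4\,\sL(\Gamma)\,\varphi(\Gamma)\,\mathcal{E}(f,f)$ for every $f:\Omega\to\RR$, where $\mathcal{E}(f,f):=\tfrac12\sum_{A,A'}\pi(A)P(A,A')(f(A)-f(A'))^2$ denotes the Dirichlet form, since $1-\lambda_2=\inf_f \mathcal{E}(f,f)/\var_\pi(f)$.

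The first substantive step is a distinguished-set variance bound. Using $\var_\pi(f)\le\sum_{I}\pi(I)(f(I)-c)^2$ for an arbitrary constant $c$, I would take $c=f_\sS:=\pi(\sS)^{-1}\sum_{F\in\sS}\pi(F)f(F)$, write $f(I)-f_\sS$ as the probability-weighted average $\sum_{F\in\sS}\tfrac{\pi(F)}{\pi(\sS)}(f(I)-f(F))$, and apply Jensen's inequality to the square. This gives
$$\var_\pi(f)\ \le\ \frac{1}{\pi(\sS)}\sum_{I\in\Omega}\sum_{F\in\sS}\pi(I)\,\pi(F)\,\big(f(I)-f(F)\big)^2,$$
which is the key refinement: congestion need only be measured against the paths that terminate in $\sS$, rather than against all pairs of states.

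Next I would route each difference along its prescribed path. Telescoping $f(I)-f(F)$ along $\gamma_{I,F}$ and applying Cauchy--Schwarz, with the number of transitions bounded by $\sL(\Gamma)$, yields $(f(I)-f(F))^2\le \sL(\Gamma)\sum_{T\in\gamma_{I,F}}(f(T^-)-f(T^+))^2$, where an oriented transition is written $T=(T^-,T^+)$. Substituting this and exchanging the order of summation to sum over transitions first, the pairs contributing to a fixed oriented transition $(A,A')$ are precisely those in $\cp_{(A,A')}$. Inserting the factor $\pi(A)P(A,A')$ into each summand exposes the congestion profile $\varphi(\Gamma)$ as an upper bound for the residual sum, and leaves $\sum_{(A,A')\in\sE_P}\pi(A)P(A,A')(f(A)-f(A'))^2$, which equals $2\,\mathcal{E}(f,f)$ by reversibility (each unordered transition occurs as an antiparallel pair).

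The part demanding the most care is this last bookkeeping: one must insert $\pi(A)P(A,A')$ correctly for each oriented transition, verify that the leftover sum over $\cp_{(A,A')}$ is exactly the quantity maximised in the definition of $\varphi(\Gamma)$, and track the reversibility factor of $2$. Carried out cleanly the argument in fact yields $\var_\pi(f)\le 2\,\sL(\Gamma)\,\varphi(\Gamma)\,\mathcal{E}(f,f)$, whence $\rel\le 2\,\sL(\Gamma)\,\varphi(\Gamma)$, so the stated bound $4\,\sL(\Gamma)\,\varphi(\Gamma)$ follows with room to spare. The conceptual ingredient that makes the method flexible enough for the application is the distinguished-set refinement of the second step, which lets one route all flow into a favourably chosen set $\sS$ instead of between every pair of states.
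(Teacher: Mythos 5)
Your proposal is correct, but note that the paper never proves this statement: it is imported, as a black box, from Schweinsberg's paper (Corollary~3 of~\cite{Schweinsberg02}), with laziness added precisely because Schweinsberg bounds the reciprocal of the spectral gap rather than of the \emph{absolute} spectral gap (see the remark following the theorem). So the relevant comparison is with the cited argument, and your route is a clean, self-contained reconstruction of it: the reduction of $\rel$ to the spectral gap via $P=\tfrac12(I+W)$ with $W=2P-I$ stochastic and reversible is exactly the right way to use laziness; the variational characterization $1-\lambda_2=\inf_f \mathcal{E}(f,f)/\var_\pi(f)$, the telescoping/Cauchy--Schwarz step with the length bounded by $\sL(\Gamma)$, the exchange of summation identifying $\cp_{(A,A')}$, and the identity $\sum_{(A,A')\in\sE_P}\pi(A)P(A,A')(f(A)-f(A'))^2=2\mathcal{E}(f,f)$ all check out. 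The one genuinely distinctive choice is your distinguished-set variance bound: comparing $f$ to the weighted mean $f_\sS$ and applying Jensen gives $\var_\pi(f)\le\pi(\sS)^{-1}\sum_{I,F}\pi(I)\pi(F)(f(I)-f(F))^2$ with constant $1$, whereas the factor $4$ in the stated theorem is what one obtains by instead routing an arbitrary pair $(I,J)$ through a random $F\in\sS$ and paying $(a-b)^2\le 2a^2+2b^2$. Consequently your argument yields the stronger conclusion $\rel\le 2\sL(\Gamma)\varphi(\Gamma)$, which of course implies the bound $4\sL(\Gamma)\varphi(\Gamma)$ as stated; this costs nothing for the paper's application, where only polynomiality of the bound matters.
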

\begin{remark}
We note that~\cite{Schweinsberg02} defines the relaxation time to be the reciprocal of the spectral gap, rather than the reciprocal of the
\emph{absolute} spectral gap, as considered here. For this reason, our statement of Theorem~\ref{Schweinsberg Theorem} includes the added
condition that the chains be lazy.
\end{remark}
In the context of the worm process, it is convenient to choose $\sS=\cycle$. This allows the construction of a natural choice of $\Gamma$,
leading to the following result.
\begin{lemma}
\label{congestion of canonical paths lemma}
There exists a choice of paths ${\Gamma=\{\gamma_{I,F}:(I,F)\in\worm\times\cycle\}}$ such that
$$
\varphi(\Gamma) \le \maxdeg\,n^4 \qquad \text{ and } \qquad \sL(\Gamma)\le~m.
$$
\end{lemma}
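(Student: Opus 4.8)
The plan is to invoke Theorem~\ref{Schweinsberg Theorem} with $\sS=\cycle$, building each path $\gamma_{I,F}$ by flipping, one edge at a time, the edges of the symmetric difference $D:=I\symdif F$. By Lemma~\ref{lem:boundary of symdif is symdif of boundaries}, $\partial D=\partial I\symdif\partial F=\partial I$, so the spanning subgraph $(V,D)$ has only even degrees apart from the zero or two vertices of $\partial I$; it thus decomposes into closed trails together with (when $I\in\defect$) one open trail joining the two vertices of $\partial I$. Fixing once and for all, as a function of $D$ alone, a canonical order on these trails and a canonical traversal of each, I would flip the edges of $D$ in this order, handling the open trail first. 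At any intermediate stage the set of flipped edges consists of some completed trails (no odd vertex) together with at most one partially traversed trail (at most two odd vertices), so every intermediate configuration lies in $\worm$ and $\gamma_{I,F}$ is a legitimate path in $\sG_{P_x}$. Its length is $|D|\le m$, whence $\sL(\Gamma)\le m$.

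For the congestion, fix a transition $T=(A,A')$ with $A'=A\symdif e$. Substituting the PS measure~\eqref{worm measure} into the definition of $\varphi$ and bounding the denominator with Lemma~\ref{lem:pi P bound}, the factors of $n$ and of the PS partition function cancel, leaving each summand at most $\frac{2\maxdeg\,\psi(I)}{\lambda_x(\cycle)}\,x^{|I|+|F|-|A\cup e|}$. The heart of the argument is then an injective encoding of the pairs routed through $T$. I would set $\eta:=(A\cup e)\symdif D$ and verify, by a short edge-counting identity, that $|\eta|=|I|+|F|-|A\cup e|$, so the exponent above is exactly $|\eta|$. Given $T$ one recovers $D=\eta\symdif(A\cup e)$, hence $\partial I=\partial D$ and the canonical order on $D$; locating the unique occurrence of $e$ in that order splits $D$ into the flipped prefix and its complement, which determines $I=A\symdif(\text{prefix})$ and $F=I\symdif D$. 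Thus $(I,F)\mapsto\eta$ is injective on $\cp_T$.

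The key step is to control the boundary of $\eta$. Since $\partial\eta=\partial(A\cup e)\symdif\partial I$, and since $A\cup e$ equals either $A$ or $A'$ — both of which lie in $\worm$ and so carry at most two odd vertices — the set $\partial\eta$ has size at most four; moreover it has size four only when $\partial I$ itself has size two, i.e. only when $I\in\defect$. I would therefore split the congestion sum according to whether $I\in\cycle$ or $I\in\defect$. In the first case $\psi(I)=n$ but $\eta\in\worm$, so by injectivity and Corollary~\ref{cor:lambda of Ck over lambda of C0} the corresponding weights sum to at most $\lambda_x(\worm)\le\lambda_x(\cycle)\,(1+\binom{n}{2})$. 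In the second case $\psi(I)=2$ while $\eta\in\cycle\cup\defect\cup\doubledefect$, so the weights sum to at most $\lambda_x(\cycle)\,(1+\binom{n}{2}+\binom{n}{4})$. Substituting both estimates, the factors of $\lambda_x(\cycle)$ cancel and a routine arithmetic simplification yields $\varphi(\Gamma)\le\maxdeg\,n^4$.

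I expect the main obstacle to be precisely this last correlation: a crude bound $\psi(I)\le n$ together with $\eta\in\doubledefect$ would cost $n\binom{n}{4}$, of order $n^5$, and miss the target. What rescues the estimate is that the factor $n$ (arising when $\psi(I)=n$) and the factor $\binom{n}{4}$ (arising when $\eta$ can reach $\doubledefect$) are mutually exclusive, since $\psi(I)=n$ forces $I\in\cycle$, which in turn forces $\partial\eta$ to have size at most two. A secondary technical point is to make every choice in the trail decomposition depend only on $D$ (say via a fixed labelling of $V$ and $E$), so that the decoding map underlying the injectivity is well defined.
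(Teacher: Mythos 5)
Your proposal is correct and follows essentially the same route as the paper: the same choice $\sS=\cycle$ in Schweinsberg's theorem, canonical paths that unwind $I\symdif F$ edge-by-edge in an order determined by $I\symdif F$ alone (the paper uses a shortest path plus a cycle decomposition rather than trails, an immaterial difference), the same injection $(I,F)\mapsto I\symdif F\symdif(A\cup e)$ with the same decoding argument, and the same crucial split of the congestion sum according to $I\in\cycle$ versus $I\in\defect$, which is precisely how the paper also avoids the fatal $n\binom{n}{4}$ term. Nothing further is needed.
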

\begin{corollary}
\label{cor:irreducibility}
The worm process on graph $G$ with parameter $x\in(0,1)$ is irreducible.
\begin{proof}
  Since the worm process is reversible, it suffices to show that it is possible to transition from an arbitrary state $I\in\worm$ to the
  fixed state $\noedges$, in a finite number of steps, with positive probability. Let $\gamma_{I,\noedges}$ denote the path from $I$ to
  $\noedges$ described in Lemma~\ref{congestion of canonical paths lemma}. Since $\sL(\Gamma)\le m$, $\gamma_{I,\noedges}$ corresponds to a
  finite sequence of transitions. By construction, this sequence occurs with positive probability.
\end{proof}
\end{corollary}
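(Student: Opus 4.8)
The plan is to reduce irreducibility to a single reachability statement and then import the paths already constructed in Lemma~\ref{congestion of canonical paths lemma}. First I would recall that $P_x$ is reversible with respect to $\pi_x$, so that $P_x(A,A')>0$ if and only if $P_x(A',A)>0$; equivalently, the transition graph $\sG_{P_x}$ is symmetric, with every transition occurring in an anti-parallel pair. Consequently the reachability relation on $\worm$ induced by $P_x$ is symmetric, and to establish irreducibility it suffices to exhibit a single distinguished state that every $I\in\worm$ can reach: for then, by symmetry, that state can also reach every $I$, and any two states communicate by routing through it.

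For the distinguished state I would take $\noedges$. Since $\partial\noedges=\emptyset$ we have $\noedges\in\cycle$, so the pair $(I,\noedges)$ lies in $\worm\times\cycle$ and Lemma~\ref{congestion of canonical paths lemma} supplies a path $\gamma_{I,\noedges}$ in $\sG_{P_x}$ from $I$ to $\noedges$. The bound $\sL(\Gamma)\le m$ guarantees this path is a finite sequence of transitions, say $I=A_0,A_1,\ldots,A_\ell=\noedges$ with $\ell\le m$. Because every edge of a path in $\sG_{P_x}$ is, by the definition of $\sE_{P_x}$, a transition with positive probability, each consecutive pair satisfies $P_x(A_i,A_{i+1})>0$, and hence $P_x^{\ell}(I,\noedges)\ge\prod_{i=0}^{\ell-1}P_x(A_i,A_{i+1})>0$. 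This is exactly the required finite positive-probability route from $I$ to $\noedges$.

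Since the substantive construction is deferred to Lemma~\ref{congestion of canonical paths lemma}, which I am free to assume, there is essentially no remaining obstacle at the level of the corollary; the only points requiring explicit verification are that $\noedges\in\cycle$ (immediate from $\partial\noedges=\emptyset$) and that membership of a pair in $\sE_{P_x}$ is synonymous with a strictly positive transition probability (true by the definition of the transition graph). I would therefore present the argument concisely, taking care only to flag clearly where reversibility is invoked, since that symmetry is precisely what upgrades the one-sided statement ``every state reaches $\noedges$'' into full irreducibility.
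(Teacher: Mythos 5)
Your proposal is correct and follows essentially the same route as the paper: both reduce irreducibility, via reversibility of $P_x$, to reaching the distinguished state $\noedges\in\cycle$ from an arbitrary $I\in\worm$, and both obtain this by invoking the path $\gamma_{I,\noedges}$ of Lemma~\ref{congestion of canonical paths lemma}, whose length is at most $m$ and whose transitions each have positive probability. Your write-up merely makes explicit a few steps the paper leaves implicit (that $\noedges\in\cycle$, and that edges of $\sG_{P_x}$ have positive transition probability by definition), which is fine.
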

\noindent Before we prove Lemma~\ref{congestion of canonical paths lemma}, we use it to prove our main result.
\begin{proof}[Proof of Theorem~\ref{thm:mixing time bound}]
Combining Lemma~\ref{congestion of canonical paths lemma} with Theorem~\ref{Schweinsberg Theorem} immediately implies
\begin{equation}
\rel\le 4\,\maxdeg\,m\,n^4,
\label{relaxation bound}
\end{equation}
as required. To bound the mixing time, we appeal to the following general bound
\begin{equation}
\mix(A,\delta) \le \left[\log\left(\frac{1}{\pi_x(A)}\right) + \log\left(\frac{1}{\delta}\right)\right] \, \rel,
\label{mixing time upper-bounded by relaxation time}
\end{equation}
which holds for any irreducible and reversible finite Markov chain (see e.g.~\cite{Sinclair92}).  Inserting~\eqref{relaxation bound}
together with Lemma~\ref{lem:minimum pi} into~\eqref{mixing time upper-bounded by relaxation time}, then yields the stated bounds for
$\mix(\noedges,\delta)$ and $\mix(\delta)$.
\end{proof}
\noindent It now remains only to prove Lemma~\ref{congestion of canonical paths lemma}.
\begin{proof}[Proof of Lemma~\ref{congestion of canonical paths lemma}]
We begin by specifying a candidate set of paths, $\Gamma$, and then go on to bound $\varphi(\Gamma)$.  To this end, fix an $[n]$-valued
vertex labeling of $G$. The labeling induces a lexicographical total order of the edges, which in turn induces a lexicographical total order
on the set of all subgraphs of $G$. For each cycle, we additionally fix an orientation by demanding that from the lowest labeled vertex in
the cycle we move to the lowest labeled of its neighbors.

In order for the worm process to transition from $I\in\worm$ to $F \in \cycle$, it suffices that it updates, precisely once, those edges in
$G$ which lie in the symmetric difference $I\symdif F$. Since $F\in\cycle$, we have $\partial F=\emptyset$, and so Lemma~\ref{lem:boundary
  of symdif is symdif of boundaries} implies $\partial(I\symdif F)=\partial I$. Suppose that $I\in\defect$ with $\partial I=\{u,v\}$, so
that $\partial(I\symdif F)=\{u,v\}$. As the sum of the degrees of each connected component of $(V,I\symdif F)$ must be even, $u$ and $v$
must belong to the \emph{same} component. Of all the shortest paths in $(V,I\symdif F)$ between $u$ and $v$, let $B_0$ denote the edge set
of the path which appears first in our fixed subgraph ordering for $G$. Now observe that $I\symdif F\setminus B_0\in\cycle$. Every element
of $\cycle$ can be decomposed into a (possibly empty) disjoint union of the edge sets of cycles in $G$ (see e.g.~\cite[\S1.9]{Diestel05}).
Decompose $I\symdif F\setminus B_0$ in this way, order the resulting cycles, and denote them by $B_1,B_2,\ldots,B_k$. Using this
prescription, we therefore obtain a unique disjoint partition $I\symdif F=\cup_{i=0}^kB_i$, where $B_0$ is a path and $B_i$ is a cycle for $i\in[k]$. 

\begin{figure*}[t]
\begin{centering}
\includegraphics[scale=0.65]{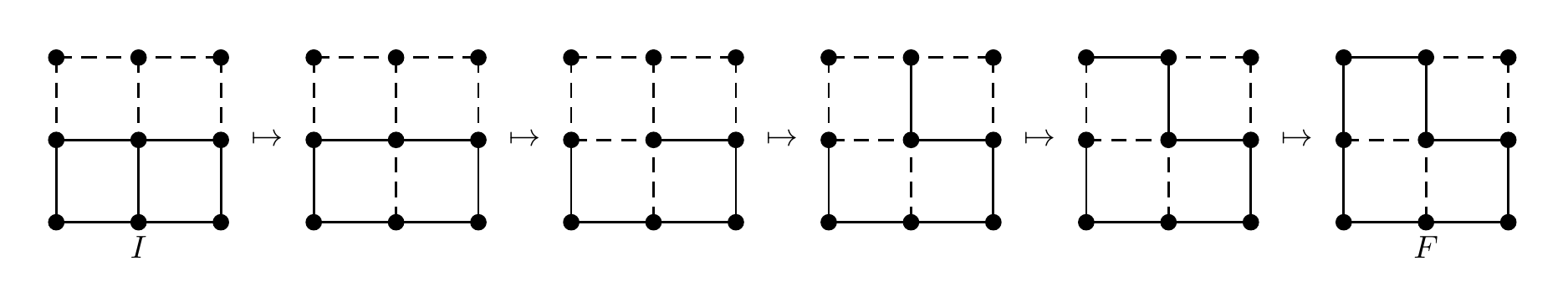}
\end{centering}
\caption{Example of a transition sequence $\gamma_{I,F}$. We order the vertices from left to right, and bottom to top. $I\symdif F=B_0\cup
  B_1$, where the path $B_0$ consists of the single edge $v_2v_5$, and the cycle $B_1$ is $v_4v_5v_8v_7v_4$.}
\label{paths figure}
\end{figure*}

We now define $\gamma_{I,F}$ as follows. The initial state in $\gamma_{I,F}$ is $I$. Beginning from either $u$ or $v$, according to which
has the lowest label, \emph{unwind} the path $B_0$. During this unwinding, the occupation status of each edge in $B_0$ is inverted, in the order in
which it occurs in $B_0$. This produces a sequence of states in $\worm$, the first being $I$, and the last being $I\symdif B_0$, with each
pair of consecutive states differing by a single edge in $B_0$.  Next, unwind the $B_i$ in order, for $i=1,2,\ldots,k$, in each case beginning
with the lowest labeled edge and proceeding according to the fixed cycle orientation. After having unwound $B_k$, we obtain a sequence of
states in $\worm$, the first being $I$ and the last being $I\symdif (\cup_{i=0}^k B_i)=F$, with each pair of consecutive states in the
sequence differing by a single distinct edge in $I\symdif F$. Let $\gamma_{I,F}$ denote this sequence of states. If the initial state $I$ lies in
$\cycle$ rather than $\defect$, the above construction still holds, but with $B_0=\emptyset$, in which case one begins $\gamma_{I,F}$ by
unwinding $B_1$.  Fig.~\ref{paths figure} illustrates a simple example of a transition sequence $\gamma_{I,F}$.

We emphasize that each $\gamma_{I,F}$ constructed according to the above prescription is in fact a path in the transition graph. Indeed,
from~\eqref{worm transition matrix} we see that the transition probability from any state in $\gamma_{I,F}$ to the next state in
$\gamma_{I,F}$ is at least $x/(n\maxdeg)$.  We have therefore constructed a path $\gamma_{I,F}$ from any $I\in\worm$ to any $F\in\cycle$. 
Let $\Gamma=\{\gamma_{I,F}:(I,F)\in\worm\times\cycle\}$ denote the collection of all such paths. Since each edge in $G$ is
updated at most once during the traversal of $\gamma_{I,F}$, we have $\sL(\Gamma)\le m$.

Given this choice of $\Gamma$, we must now bound $\varphi(\Gamma)$. 
Define
$$ 
\cp_{T,k}:=\{(I,F)\in\sC_k\times\cycle:\gamma_{I,F}\ni T\},
$$
and $\cp_{T}:=\cp_{T,0}\cup\cp_{T,2}$. Following arguments similar to those
in~\cite{JerrumSinclair89,Jerrum03}, for each transition $T=(A,A')\in \sE_{P_x}$ we now introduce a map
$\eta_T:\cp_T\to\cycle\cup\defect\cup\doubledefect$ via
$$
\eta_T(I,F) :=\ I\symdif F\symdif (A\cup A').
$$
Since $T$ is of the form $T=(A,A\symdif e)$ for some $e\in E$, we have $\eta_T(I,F) = I\symdif F \symdif(A\cup e)$.  We claim that for each
$T\in \sE_{P_x}$, the map $\eta_T$ is an injection. To show this, we demonstrate how to reconstruct $I$ and $F$ given $\eta_T(I,F)$. The first
observation is that we can recover $I\symdif F$ by simply using $I\symdif F = \eta_T(I,F)\symdif (A\cup e)$.  From our fixed graph labeling,
we can then immediately infer the order in which the edges in $I\symdif F$ must be unwound when traversing the transition path
$\gamma_{I,F}$. Given this information, we can then begin in state $A$ and unwind the remaining edges in $I\symdif F$ specified by
$\gamma_{I,F}$, to recover $F$. We then recover $I$ via ${I=\eta_T(I,F)\symdif(A\cup e)\symdif F}$, and so $\eta_T$ is indeed an
injection.

Next, we prove that for any $T\in \sE_{P_x}$, we can bound the summand appearing in $\varphi(\Gamma)$ by
\begin{equation}
\frac{1}{\pi_x(\cycle)} \frac{\pi_x(I)\pi_x(F)}{\pi_x(A)P_x(A,A')}
\le
\frac{2\maxdeg}{\lambda_x(\cycle)} \psi(I) \lambda_x(\eta_T(I,F)).
  \label{eta bound}
\end{equation}
Let $T=(A,A\symdif e)$ for some $A\in\worm$ and $e\in E$. From Lemma~\ref{lem:pi P bound}, and the fact that $F\in\cycle$, we have
\begin{align*}
\frac{1}{\pi_x(\cycle)} \frac{\pi_x(I)\pi_x(F)}{\pi_x(A)P_x(A,A')}
&=
\frac{1}{n\,\lambda_x(\cycle)} \frac{\psi(I)\,\lambda_x(I)\, n\,\lambda_x(F)}{\psi(A)\lambda_x(A)P_x(A,A')}\\
&\le
\frac{1}{\lambda_x(\cycle)} \psi(I)\,\lambda_x(I)\, \lambda_x(F) \frac{2\maxdeg}{x^{|A\cup e|}}\\
&=
\frac{2\maxdeg}{\lambda_x(\cycle)} \psi(I) x^{|I|+|F|-|A\cup e|} \\
&=
\frac{2\maxdeg}{\lambda_x(\cycle)} \psi(I) x^{|\eta_T(I,F)|}.
\end{align*}
The final equality is a consequence of the following elementary set-theoretic observation, which was utilized in an analogous context
in~\cite{JerrumSinclair93}.  Let $B$ be any subset of $E$ satisfying $I\cap F\subseteq B \subseteq I\cup F$, and set $U=I\symdif F\symdif
B$. It follows that $U\cap B=I\cap F$ and $U\cup B=I\cup F$, and so the inclusion-exclusion principle yields
\begin{equation}
|I|+|F| = |I\cup F| + |I\cap F| = |U\cup B| + |U\cap B| = |U| + |B|.
\label{set theoretic identity}
\end{equation}
Now, by definition, the path $\gamma_{I,F}$ modifies only the edges in $I\symdif F$. Therefore, no edges in $I\cap F$ are updated when
traversing $\gamma_{I,F}$. Conversely, the only edges which can be updated in traversing $\gamma_{I,F}$ are those belonging to $I\cup F$. It
follows that $A\cup e$ satisfies the constraint $I\cap F\subseteq A\cup e\subseteq I\cup F$. Choosing $B=A\cup e$ and $U=\eta_T(I,F)$
in~\eqref{set theoretic identity} then yields 
$$
|I|+|F|-|A\cup e|=|\eta_T(I,F)|,
$$
which establishes~\eqref{eta bound}.

Now let $T=(A,A\symdif e)\in\sE_{P_x}$ be a maximally congested transition. It follows from~\eqref{eta bound} that
\begin{align}
\varphi(\Gamma) 
&\le
 \sum_{(I,F)\in\cp_{T}}
 \frac{2\maxdeg}{\lambda_x(\cycle)} \psi(I) \lambda_x(\eta_T(I,F))
\nonumber\\
 &=
 \frac{2\maxdeg}{\lambda_x(\cycle)}\left[\sum_{(I,F)\in\cp_{T,0}} n\,\lambda_x(\eta_T(I,F)) + \sum_{(I,F)\in\cp_{T,2}}2\,\lambda_x(\eta_T(I,F))\right]
\nonumber\\
 &=\frac{2\maxdeg}{\lambda_x(\cycle)}\left[n\,\lambda_x(\eta_T(\cp_{T,0})) + 2\,\lambda_x(\eta_T(\cp_{T,2}))\right],
\label{decomposed congestion bound}
\end{align}
where the second equality follows from the fact that $\eta_T$ is an injection, and $\eta_T(\cp_{T,k})$ denotes the image of the set
$\cp_{T,k}$ under the map $\eta_T$. By assumption, for any $(I,F)\in\cp_{T,k}$ we have $F\in\cycle$. Since $A\cup e\in\worm$,
Lemma~\ref{lem:boundary of symdif is symdif of boundaries} implies that for any $(I,F)\in\cp_{T,k}$, the state $\eta_T(I,F)$ belongs to
$\worm\cup\doubledefect$. Moreover, if $(I,F)\in\cp_{T,0}$, then in fact $\eta_T(I,F)\in\worm$. Therefore, $\eta_T(\cp_{T,0})\subseteq\worm$
and $\eta_T(\cp_{T,2})\subseteq\worm\cup\sC_4$, and it follows from Corollary~\ref{cor:lambda of Ck over lambda of C0} that
\begin{align*}
  \varphi(\Gamma)
  &\le \frac{2\maxdeg}{\lambda_x(\cycle)}\left[n\,\lambda_x(\sC_0\cup\sC_2) + 2\,\lambda_x(\sC_0\cup\sC_2\cup\doubledefect)\right]\\
  &= 2\maxdeg \left[(n+2) + (n+2)\frac{\lambda_x(\defect)}{\lambda_x(\cycle)} + 2\frac{\lambda_x(\doubledefect)}{\lambda_x(\cycle)}\right]\\
  &\le 2\maxdeg \left[(n+2) + (n+2)\binom{n}{2}+ 2\binom{n}{4}\right]\\
  &\le \maxdeg \, n^4.
\end{align*}
\end{proof}

\section{Fully-polynomial randomized approximation schemes}
\label{sec:fpras}
The rapid mixing of the worm process allows us to construct an efficient randomized approximation scheme for the Ising susceptibility, as well
as the correlation between the spins on any two sites whose distance is not more than some fixed value $k$. These schemes, which simply involve
burning in, and then computing sample means of certain natural random variables, coincide exactly with what a computational physicist
would do in practice.

Before constructing these schemes, we address the issue of how to initialize a worm process. A \emph{cold start} of a Markov chain refers to
starting the process in a fixed initial state. Since the state $\noedges\in\worm$ maximizes $\pi_x(A)$, it is the natural choice of initial
state in a cold start of the worm process. Moreover, since directly constructing arbitrary elements of $\worm$ is a non-trivial task,
initializing a worm process via a more general distribution on $\worm$ is unlikely to be practical in actual simulations. Indeed, perhaps
the simplest way to generate arbitrary elements of $\worm$ is via worm proposals, starting from $\noedges$. Therefore, in this section, we
assume the worm process is started in the fixed state $\noedges$.

We introduce some terminology which will prove convenient below. Consider a positive quantity $x>0$. We say another quantity $\hat{x}$
estimates $x$ with relative error $\epsilon$ if $|\hat{x}-x|\le \epsilon x$. In the case where $\hat{x}$ is random, we say that $\hat{x}$
provides an $(\epsilon,\delta)$-approximation for $x$ if the probability that $\hat{x}$ approximates $x$ with relative error $\epsilon$ is
at least $1-\delta$. In this language, an fpras for an Ising observable is a randomized algorithm which, for arbitrary choices of
$(\epsilon,\delta)$, provides an $(\epsilon,\delta)$-approximation for the given observable, and which runs in a time polynomial in $\epsilon^{-1}$,
$\delta^{-1}$ and $n$.

To establish that our worm estimators yield fprases, we require an appropriate concentration result for the worm process.  There are a
number of Chernoff-type bounds available for finite Markov chains~\cite{Gillman98,Lezaud98}. For our purposes, however, it is convenient to
instead use~\cite[Theorem 12.19]{LevinPeresWilmer09}, which is obtained by bounding the mean square error and then applying Chebyshev's
inequality. Combining this with Theorem~\ref{thm:mixing time bound} we obtain the following.
  \begin{lemma}
    \label{lem:confidence bound}
    Let $(X_t)_{t\in\NN}$ be a worm process on finite connected graph $G$ with parameter $x\in(0,1)$ and $X_0=\noedges$. Let $f:\worm\to\RR^+$, and
    $\epsilon,\delta\in(0,1)$. If
    \begin{align}
    \tau &= \left\lceil4\left(\log(2)+\frac{\log(2\delta^{-1})} {m}\right)\maxdeg\,m^2\,n^4\right\rceil,\label{burn-in bound}\\
       N &\ge \left\lceil16\epsilon^{-2}\delta^{-1}
    \maxdeg\,m\, n^4\, \frac{\|f\|_{\infty}}{\EE_{\pi_x}(f)}\right\rceil,\label{sampling bound}
    \end{align}
    then
    \begin{equation}
    \PP_{\noedges}\left(\left|\frac{1}{N}\sum_{t=\tau+1}^{\tau+N} f(X_t) - \EE_{\pi_x}(f) \right| \ge \epsilon\, \EE_{\pi_x}(f)\right) \le \delta.
    \end{equation}
    In particular, the worm process provides an $(\epsilon,\delta)$-approximation of $\EE_{\pi_x}(f)$ in time of order
    $N+\tau=\displaystyle O\left(\maxdeg\,m\,n^4 \left[m + \epsilon^{-2}\delta^{-1}\frac{\|f\|_{\infty}}{\EE_{\pi_x}f}\right]\right)$.
    \begin{proof}
      The stated result follows almost immediately from Theorem~\ref{thm:mixing time bound} and~\cite[Theorem 12.19]{LevinPeresWilmer09}.
      Although the latter theorem is stated in a form which is uniform over all possible initial states used in a cold start, the proof given
      in~\cite{LevinPeresWilmer09} actually establishes the following slightly sharper result, in which the dependence on the initial state is explicit.
      Specifically, if $\tau\ge\mix(\noedges,\delta/2)$ and $N\ge4\displaystyle\frac{\var_{\pi_x}(f)}{\EE_{\pi_x}(f)^2} \epsilon^{-2}\delta^{-1}\rel$ then 
      $$
      \PP_{\noedges}\left(\left|\frac{1}{N}\sum_{s=0}^{N-1}f(X_{\tau+s})-\EE_{\pi_x}(f)\right|\ge\epsilon\EE_{\pi_x}(f)\right)\le\delta.
      $$
      Since $f$ is assumed positive, we have $\var_{\pi_x}(f)\le \|f\|_{\infty} \EE_{\pi_x}(f)$. The stated result then follows from
      Theorem~\ref{thm:mixing time bound}.
    \end{proof}
  \end{lemma}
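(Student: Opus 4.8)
The plan is to reduce the statement to a standard mean-square-error bound for reversible Markov chains, namely the form of~\cite[Theorem 12.19]{LevinPeresWilmer09}, and then to verify that the prescribed $\tau$ and $N$ meet the hypotheses of that bound by feeding in the relaxation- and mixing-time estimates of Theorem~\ref{thm:mixing time bound}. The relevant concentration result asserts that, for a reversible chain started from a fixed state, provided the burn-in $\tau$ exceeds the mixing time to accuracy $\delta/2$ and the sample size satisfies $N\ge 4\,\var_{\pi_x}(f)\,\EE_{\pi_x}(f)^{-2}\,\epsilon^{-2}\,\delta^{-1}\,\rel$, the time-averaged estimator concentrates as claimed. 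The first thing I would do is record this result in its \emph{state-dependent} form: although~\cite{LevinPeresWilmer09} phrases the conclusion uniformly over cold-start states, the proof there actually bounds the expected squared deviation in terms of $\mix(\noedges,\delta/2)$ and $\rel$, and it is this sharper version that is needed in order to exploit the favourable bound on $\mix(\noedges,\cdot)$ rather than the worst-case $\mix(\cdot)$.

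Granting this, the argument becomes two substitutions. First I would verify the burn-in condition $\tau\ge\mix(\noedges,\delta/2)$. Replacing $\delta$ by $\delta/2$ in the bound for $\mix(\noedges,\delta)$ supplied by Theorem~\ref{thm:mixing time bound} gives exactly $4\left[\log 2+m^{-1}\log(2\delta^{-1})\right]\maxdeg\,m^2\,n^4$, since $\log(\delta/2)^{-1}=\log(2\delta^{-1})$; hence the choice~\eqref{burn-in bound}, being the ceiling of this quantity, suffices. Second I would verify the sampling condition, where the only nontrivial ingredient is an elementary variance estimate: because $f$ takes values in $\RR^+$, we have $\var_{\pi_x}(f)=\EE_{\pi_x}(f^2)-\EE_{\pi_x}(f)^2\le\EE_{\pi_x}(f^2)\le\|f\|_{\infty}\,\EE_{\pi_x}(f)$, so that $\var_{\pi_x}(f)\,\EE_{\pi_x}(f)^{-2}\le\|f\|_{\infty}/\EE_{\pi_x}(f)$. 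Combining this with the relaxation bound $\rel\le 4\maxdeg\,m\,n^4$ from Theorem~\ref{thm:mixing time bound} converts the abstract requirement $N\ge 4\,\var_{\pi_x}(f)\,\EE_{\pi_x}(f)^{-2}\,\epsilon^{-2}\,\delta^{-1}\,\rel$ into $N\ge 16\,\epsilon^{-2}\,\delta^{-1}\,\maxdeg\,m\,n^4\,\|f\|_{\infty}/\EE_{\pi_x}(f)$, which is precisely~\eqref{sampling bound}.

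The final ``in particular'' clause then follows by adding the two bounds and collecting the common factor $\maxdeg\,m\,n^4$, giving running time $N+\tau=O\!\left(\maxdeg\,m\,n^4\left[m+\epsilon^{-2}\delta^{-1}\|f\|_{\infty}/\EE_{\pi_x}(f)\right]\right)$. I do not anticipate any genuine obstacle here; the lemma is essentially a verification that the two parameter choices clear the thresholds of a known concentration inequality. The single point demanding care is the one flagged above, namely ensuring that one invokes the variant of the bound whose burn-in requirement is the mixing time \emph{from the specific cold-start state} $\noedges$, not the worst-case $\mix(\delta/2)$: it is exactly the gap between $\mix(\noedges,\cdot)$ and $\mix(\cdot)$ that keeps the estimator efficient, which matters in the applications where $\EE_{\pi_x}(f)$ may be small.
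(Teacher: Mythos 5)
Your proposal is correct and follows essentially the same route as the paper's proof: invoke the state-dependent sharpening of~\cite[Theorem 12.19]{LevinPeresWilmer09} (burn-in measured by $\mix(\noedges,\delta/2)$ rather than the worst-case mixing time), bound $\var_{\pi_x}(f)\le\|f\|_{\infty}\EE_{\pi_x}(f)$ using positivity of $f$, and substitute the relaxation- and mixing-time bounds of Theorem~\ref{thm:mixing time bound}. The only difference is that you carry out the arithmetic verification of~\eqref{burn-in bound} and~\eqref{sampling bound} explicitly, which the paper leaves implicit.
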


We note that there is a simple and standard method, often referred to as the \emph{median trick}, to replace the linear dependence of
$\delta^{-1}$ in the running time of the method in Lemma~\ref{lem:confidence bound} with a logarithmic dependence. Set $\delta=1/4$, and
choose $\tau$ and $N$ as specified in Lemma~\ref{lem:confidence bound}. Use the worm process to generate $k$ independent estimates
$Y_1,\ldots,Y_k$ of $\EE_{\pi_x}(f)$ with these parameters. Choose $k=6\lceil\lg\eta^{-1}\rceil+1$, and let $Y$ be the median of
this set of estimates. Then~\cite{JerrumValiantVazirani86,JerrumSinclair93} we have
$$ 
\PP_{\noedges}(|Y- \EE_{\pi_x} f | \ge \epsilon\, \EE_{\pi_x}f) \le \eta.
$$ 
The choice of $\delta=1/4$ is arbitrary; choosing any other fixed $\delta\in(0,1/2)$ produces a similar result. The constants in the
bound can be sharpened somewhat by choosing $\delta$ more carefully; see~\cite{NiemiroPokarowski09} for a discussion.

\subsection{Susceptibility}
In this section, we apply Lemma~\ref{lem:confidence bound} to the estimation of the susceptibility.
Let $\epsilon\in(0,1)$. We begin with the elementary and general observation that if a quantity $\hat{x}$ estimates a quantity $x$ with
relative error $\epsilon/(1+\epsilon)$, then $1/\hat{x}$ estimates $1/x$ with relative error $\epsilon$. If we construct a quantity
$\hat{S}_0$ which estimates $\pi_x(\cycle)$ with relative error $\epsilon/(1+\epsilon)$, it therefore follows
from~\eqref{susceptibility identity} that $1/\hat{S}_0$ estimates $\chi_{\beta}$ with relative error $\epsilon$.

Let $(X_t)_{t\in\NN}$ be a worm process on finite connected graph $G$ with parameter $x=\tanh(\beta)$ and $X_0=\noedges$.
Since Lemma~\ref{lem:pi C0 and pi Cuv lower bounds} implies
$$
\frac{\|\indicator_{\cycle}\|_{\infty}}{\EE_{\pi_{x}}(\indicator_{\cycle})}=\frac{1}{\pi_{x}(\cycle)} \le n,
$$
setting $f=\indicator_{\cycle}$ and $\epsilon\mapsto \epsilon/(1+\epsilon)$ in Lemma~\ref{lem:confidence bound} shows that if
\begin{equation}
\hat{S}_0 = \frac{1}{N}\sum_{t=\tau+1}^{\tau+N} \indicator_{\cycle}(X_t)
\label{C0 estimator}
\end{equation}
with $\tau$ given by~\eqref{burn-in bound} and 
\begin{equation}
 N= \left\lceil16\epsilon^{-2}(1+\epsilon)^2\delta^{-1} \maxdeg\,m\,n^5 \right\rceil,
\label{N susceptibility}
\end{equation}
then $1/\hat{S}_0$ provides an $(\epsilon,\delta)$-approximation of $\chi_{\beta}$ in time $N+\tau=O(\delta^{-1}\epsilon^{-2}\maxdeg\,m^2\,n^4)$. 
This procedure therefore defines an fpras for $\chi_{\beta}$.

\subsection{Correlation function}
Lemma~\ref{lem:confidence bound} can also be applied to the two-point correlations.  In general, if quantities $\hat{x}$ and $\hat{y}$
estimate quantities $x$ and $y$ with relative error $\epsilon/(2+\epsilon)$, then for any positive constant $c$, the quantity
$c\hat{x}/\hat{y}$ estimates $c x/y$ with relative error $\epsilon$. If $\hat{S}_0$ and $\hat{S}_{uv}$ respectively estimate $\pi_x(\cycle)$
and $\pi_x(\sC_{uv})$, each with relative error $\epsilon/(2+\epsilon)$, it therefore follows from~\eqref{correlation identity} that
\begin{equation}
\frac{n}{2}\frac{\hat{S}_{uv}}{\hat{S}_0}
\label{correlation estimator}
\end{equation}
estimates $\EE_{\pi_x}(\sigma_u\sigma_v)$ with relative error $\epsilon$.

Let $(X_t)_{t\in\NN}$ be a worm process on a finite connected graph $G=(V,E)$ with parameter $x=\tanh(\beta)$ and $X_0=\noedges$. 
We can again estimate $\pi_x(\cycle)$ with $\hat{S}_0$ given by~\eqref{C0 estimator}, and we can likewise estimate
$\pi_x(\sC_{uv})$ with
\begin{equation}
\hat{S}_{uv} := \frac{1}{N}\sum_{t=\tau+1}^{\tau+N}\indicator_{\sC_{uv}}(X_t).
\label{Cuv estimator}
\end{equation}
We emphasize that $\hat{S}_0$ and $\hat{S}_{uv}$ can both be computed from the same realization of $(X_t)_{t\in\NN}$. 
If we demand that $\hat{S}_0$ and $\hat{S}_{uv}$ respectively estimate $\pi_x(\cycle)$ and $\pi_x(\sC_{uv})$ with probability at least
$1-\delta/2$, then the union bound guarantees that~\eqref{correlation estimator} estimates $\EE_{\pi_x}(\sigma_u\sigma_v)$ with probability at least
$1-\delta$.

  Now fix $k\in\NN$. Lemma~\ref{lem:pi C0 and pi Cuv lower bounds} implies that for any $u,v\in V$ with $d(u, v)\le k$, we have
  $$
  \frac{\|\indicator_{\sC_{uv}}\|_{\infty}}{\EE_{\pi_{x}}(\indicator_{\sC_{uv}})}
  =\frac{1}{\pi_{x}(\sC_{uv})} \le \frac{n^2}{2} x^{-k}.
  $$
  Therefore, choosing in~\eqref{Cuv estimator}
  \begin{equation}
  N= \left\lceil16\epsilon^{-2}(2+\epsilon)^2\delta^{-1} \maxdeg\,m\, n^6 x^{-k}\right\rceil
  \label{N correlation}
  \end{equation}
  and $\tau$ as in~\eqref{burn-in bound} with $\delta\mapsto\delta/2$, implies that $\hat{S}_{uv}$ provides an
  $(\epsilon/(2+\epsilon),\delta/2)$-approximation for $\pi_x(\sC_{uv})$.  Since~\eqref{N correlation} is strictly larger than~\eqref{N
    susceptibility}, using this same choice of $N$ and $\tau$ in~\eqref{C0 estimator} also implies that $\hat{S}_0$ provides an
  $(\epsilon/(2+\epsilon),\delta/2)$-approximation for $\pi_x(\cycle)$. Therefore, with this choice of $N$ and $\tau$, the
  estimator~\eqref{correlation estimator} provides an $(\epsilon,\delta)$-approximation for $\EE_{\pi_x}(\sigma_u\sigma_v)$ in time 
  $N+\tau=O(\epsilon^{-2}\delta^{-1}\maxdeg\,m\,n^6 x^{-k})$.

  Finally, let us define the \emph{$k$-restricted} Ising two-point correlation function
  $$
  g_k:\{(u,v)\in V\times V: d(u,v)\le k\}\to\RR
  $$
  by $g_k(u,v)=\EE_{\pi_x}(\sigma_u\sigma_v)$. It follows that, for any fixed $k\in\NN$, the above construction provides an fpras for the
  problem of computing $g_k$.

\begin{acknowledgements}
  The authors wish to gratefully acknowledge the contributions of Greg Markowsky to the early stages of this project, and to also thank Eren
  Metin El\c{c}i, Catherine Greenhill and Alan Sokal for insightful comments on an earlier draft. T.G. also gratefully acknowledges discussions of
  the worm process with many colleagues, particularly Youjin Deng, Catherine Greenhill, Alan Sokal, Boris Svistunov and Ulli Wolff. This
  work was supported under the Australian Research Council's Discovery Projects funding scheme (project numbers DP140100559 \& DP110101141),
  and T.G. is the recipient of an Australian Research Council Future Fellowship (project number FT100100494). A.C. would like to thank STREP
  project MATHEMACS.
\end{acknowledgements}

\bibliographystyle{spmpsci}      

\end{document}